\numberwithin{equation}{section}
\def\today{\ifcase\month\or Jan\or Febr\or  Mar\or  Apr\or May\or Jun\or  Jul\or Aug\or  Sep\or  Oct\or Nov\or  Dec\or\fi \space\number\day, \number\year}
\newcommand{\CC}{\mathbb C}
\newcommand{\DD}{\mathbb D}
\newcommand{\EE}{\mathbb E}
\newcommand{\FF}{\mathbb F}
\newcommand{\PP}{\mathbb P}
\newcommand{\QQ}{\mathbb Q}
\newcommand{\ZZ}{\mathbb Z}
\newcommand\langepijl[1]{\buildrel {#1} \over \longrightarrow}
\numberwithin{equation}{section}
\newtheorem{theorem}{Theorem}[section]
\newtheorem{lemma}[theorem]{Lemma}
\newtheorem{proposition}[theorem]{Proposition}
\newtheorem{conclusion}[theorem]{Conclusion}
\newtheorem{corollary}[theorem]{Corollary}
\newtheorem{definition-lemma}[theorem]{Definition-Lemma}
\theoremstyle{definition}
\theoremstyle{remark}
\newtheorem{remark}[theorem]{Remark}
\begin{document}

\title[Siegel Modular Forms  and Invariant Theory]{Siegel 
Modular Forms of Degree two and Three\\
and Invariant Theory}

\author{Gerard van der Geer}
\address{Korteweg-de Vries Instituut, Universiteit van
Amsterdam, Postbus 94248,
1090 GE  Amsterdam, The Netherlands}
\email{g.b.m.vandergeer@uva.nl}

\begin{abstract} This is a survey based 
on the construction of Siegel modular forms
of degree $2$ and $3$ using invariant theory in joint work with
Fabien Cl\'ery and Carel Faber. 
\end{abstract}

\subjclass{10D, 11F46, 14H10, 14H45, 14J15, 14K10}
\keywords{modular form, Siegel modular form, invariant theory, algebraic curves}
\maketitle
\hfill{\it Dedicated to Ciro Ciliberto on the occasion of his 70th birthday}
\begin{section}{Introduction}
Modular forms are sections of naturally defined vector bundles on arithmetic
quotients of bounded symmetric domains. Often such quotients can be
interpreted as moduli spaces and sometimes this moduli interpretation
allows a description as a stack quotient under the action of an algebraic
group like ${\rm GL}_n$. In such cases classical invariant theory can be used
for describing modular forms.

In the 1960s Igusa used the close connection between the moduli of 
principally polarized complex abelian surfaces and the moduli of 
algebraic curves of genus two to describe the ring of scalar-valued 
Siegel modular forms of degree two (and level $1$) 
in terms of invariants of the action
of ${\rm GL}_2$ acting on binary sextics, 
see \cite{Igusa1960,Igusa1967}. 
Igusa used theta functions 
and a crucial step in Igusa's approach were Thomae's formulas from the 19th century 
that link theta constants for hyperelliptic curves to the cross ratios of
the branch points of the canonical map of the hyperelliptic curve to
${\PP}^1$. 

In the 1980s Tsuyumine, continuing the work of Igusa, 
used the connection between
the moduli of abelian threefolds and curves of genus $3$ to describe
generators for the ring of scalar-valued Siegel modular forms of degree
$3$ (and level $1$). He used the moduli of hyperelliptic curves
of genus $3$ as an intermediate step and used theta functions 
and the invariant theory
of binary octics as developed by Shioda, see \cite{Tsuyumine,Shioda}. 

The description of the moduli of curves of genus $2$ (resp.\ $3$) in terms of
a stack quotient of ${\rm GL}_2$ acting on binary sextics (resp.\ of 
${\rm GL}_3$ acting on ternary quartics) makes it possible to construct
the modular forms directly from the stack quotient without the recourse
to theta functions or cross ratios. This applies not only to
scalar-valued modular forms, but to vector-valued
modular forms as well. Covariants (or concomitants) yield explicit
modular forms in an efficient way. This in contrast to earlier
and more laborious methods of constructing
vector-valued Siegel modular forms of degree $2$ and $3$ that
use theta functions.

In joint work with Fabien Cl\'ery and Carel Faber 
\cite{CFvdG,CFvdG2,CFvdG3}
we exploited this for the construction of
 Siegel modular forms of degree $2$ and $3$.
In degree $2$ the universal binary sextic,
the most basic covariant, defines a meromorphic Siegel modular form
$\chi_{6,-2}$ of weight $(6,-2)$. Substituting the coordinates
of $\chi_{6,-2}$ in a covariant produces a meromorphic modular form
that becomes holomorphic after multiplication by an appropriate
power of $\chi_{10}$, a cusp form of weight $10$ associated to the
discriminant.
For degree $3$  we can play a similar game, now 
involving the universal ternary quartic and
a meromorphic Teichm\"uller modular form $\chi_{4,0,-1}$ 
of weight $(4,0,-1)$ that 
becomes a holomorphic Siegel modular form $\chi_{4,0,8}$ of weight 
$(4,0,8)$ after multiplication with
$\chi_9$, a Teichm\"uller form of weight $9$ related to the discriminant. 

With this approach it is easy to retrieve Igusa's result on the ring
of scalar modular forms of degree $2$.
Another advantage of this direct approach is that one can treat modular forms
in positive characteristic as well.  
Thus it enabled the determination of the rings 
of scalar valued modular forms of degree $2$ in characteristic $2$ and $3$,
two cases that were unaccounted for so far, see \cite{vdG2020,C-vdG3}.

In this survey we sketch the approach and indicate how one constructs
Siegel modular forms of degree $2$ and $3$.  We show how to derive
the results on the rings of scalar-valued modular forms of degree $2$.
\end{section}
\begin{section}{Siegel Modular Forms}
Classically, Siegel modular forms are described as functions on the
Siegel upper half space. We recall the definition.

For $g\in {\ZZ}_{\geq 0}$ we set $\mathcal{L}={\ZZ}^{2g}$ with generators
$e_1,\ldots,e_g,f_1,\ldots,f_g$ and define a symplectic form 
$\langle \, , \, \rangle$ on $\mathcal{L}$ via $\langle e_i,f_j\rangle =\delta_{ij}$.
The Siegel modular group $\Gamma_g={\rm Aut}(\mathcal{L},\langle \, , \, \rangle)$ 
of degree $g$ is the automorphism group of this symplectic lattice. 
Here we write an
element $\gamma \in \Gamma_g$ as a matrix
$\left( \begin{matrix}a & b \\ c & d \\ \end{matrix} \right)$
of four $g\times g$ blocks
using the basis $e_i$ and $f_i$; we often 
abbreviate this as $\gamma=(a,b;c,d)$.
The group $\Gamma_g$
 acts on the Siegel upper half space 
$$
\mathfrak{H}_g=\{
\tau \in {\rm Mat}(g\times g, {\CC}): \tau^t=\tau, \, {\rm Im}(\tau)>0\}
$$
via $\tau \mapsto \gamma(\tau)  = (a \tau+b)(c \tau+d)^{-1}$. 

A scalar-valued Siegel modular form of weight $k$ and
degree $g>1$ is a holomorphic function $f: \mathfrak{H}_g \to {\CC}$
satisfying $f(\gamma(\tau))=\det(c\tau+d)^k f(\tau)$ for all $\gamma=(a,b;c,d) \in 
\Gamma_g$. If $\rho: {\rm GL}(g) \to {\rm GL}(V)$ 
is a complex representation of ${\rm GL}_g$ then a 
vector-valued Siegel modular form of weight $\rho$ and degree $g>1$ 
is a holomorphic map $f: \mathfrak{H}_g \to V$ satisfying
$$
f(\gamma(\tau))= \rho(c\tau+d) f(\tau) \qquad \text{\rm for all 
$\gamma=(a,b;c,d) \in \Gamma_g$} \eqno(1)
$$  
We may restrict to irreducible representations $\rho$.
For $g=1$ we have to require an additional growth condition 
for $y={\rm Im}(\tau) \to \infty$.

\smallskip

However, for an algebraic geometer modular forms are sections of vector bundles.
Let $\mathcal{A}_g$ be the moduli space of principally polarized abelian
varieties of dimension $g$. This is a Deligne-Mumford stack of relative
dimension $g(g+1)/2$ over ${\ZZ}$. It carries a universal principally polarized
abelian variety $\pi:\mathcal{X}_g \to \mathcal{A}_g$. This provides $\mathcal{A}_g$
with a natural vector bundle ${\EE}={\EE}^{(g)}$, the Hodge bundle, defined as
$$
{\EE}=\pi_{*}(\Omega^1_{\mathcal{X}_g/\mathcal{A}_g})\, .
$$
Starting from ${\EE}$ we can create new vector bundles. Each
irreducible representation $\rho$ of ${\rm GL}_g$ defines a vector bundle
${\EE}_{\rho}$ by applying a Schur functor (or just by applying $\rho$
to the transition functions of ${\EE}$). In particular, we have the
determinant line bundle $L=\det({\EE})$. Scalar-valued modular forms
of weight $k$ are sections of $L^{\otimes k}$ and these form a graded ring.
In fact, for $g\geq 2$ and each commutative ring $F$ we have the ring
$$
R_g(F)=\oplus_k H^0(\mathcal{A}_g \otimes F, L^{\otimes k})\, .
$$

The moduli space $\mathcal{A}_g$ can be compactified. 
There is the Satake compactification, 
in some sense a minimal compactification,
based on the fact that $L$ is an ample line bundle 
on $\mathcal{A}_g$.
This compactification $\mathcal{A}_g^{\ast}$ 
is defined as ${\rm Proj}(R_g)$ and satisfies
the inductive property 
$$
\mathcal{A}_g^{\ast}=\mathcal{A}_g \sqcup \mathcal{A}_{g-1}^{\ast}\, .
$$
Restriction to the `boundary' $\mathcal{A}_{g-1}^{\ast}$ induces a map
called the Siegel operator
$$
\Phi: R_g(F) \to R_{g-1}(F)\, .
$$
We will also use (smooth)
Faltings-Chai type compactifications $\tilde{\mathcal{A}}_g$ 
and over these the Hodge bundle extends (\cite{F-C}). 
We will denote the extension also by ${\EE}$.

For $g>1$ the Koecher principle holds:
sections of ${\EE}_{\rho}$ over $\mathcal{A}_g$ extend to regular sections 
of the extension of ${\EE}_{\rho}$ over $\tilde{\mathcal{A}}_g$,
see \cite[Prop 1.5, p.\ 140]{F-C}. For $g=1$
this does not hold since the boundary in $\mathcal{A}_1^{\ast}$ is a divisor,
and we define modular forms of weight $k$ as sections
of $L^{\otimes k}$ over $\tilde{\mathcal{A}}_1$. If $D$ denote the divisor
added to $\tilde{\mathcal{A}}_g$ to compactify $\mathcal{A}_g$, then 
elements of $H^0(\tilde{\mathcal{A}}_g,{\EE}_{\rho}\otimes \mathcal{O}(-D))$
are called cusp forms.

We will write $M_{\rho}(\Gamma_g)(F)$ for $H^0(\tilde{\mathcal{A}}_g\otimes F, {\EE}_{\rho})$ or simply $M_{\rho}(\Gamma_g)$ when $F$ is clear. The space of cusp forms 
is denoted by $S_{\rho}(\Gamma_g)$. By the Koecher principle the spaces
$M_{\rho}(\Gamma_g)(F)$ and $S_{\rho}(\Gamma_g)(F)$ do not
depend on the choice of a Faltings-Chai compactification.

Over the complex numbers if $\rho: {\rm GL}(g) \to {\rm GL}(V)$ is an 
irreducible representation, elements of 
$H^0(\tilde{\mathcal{A}}_g\otimes {\CC},{\EE}_{\rho})$
correspond to holomorphic functions $f: \mathfrak{H}_g \to V$ satisfying (1).
Such a function allows a Fourier expansion
$$
f(\tau)= \sum_{n\geq 0} a(n) \, q^n\, ,
$$
where the sum is over symmetric $g\times g$ half-integral matrices (meaning
$2n$ is integral and even on the diagonal) which are positive semi-definite, $a(n)\in V$ 
and $q^n$ is shorthand for $e^{2\pi i {\rm Tr}(n \tau)}$.

The definition
$$
R_g(F)= \oplus_k H^0(\tilde{\mathcal{A}}_g\otimes F, L^{\otimes k})
$$
for a commutative ring $F$ allows one speak of modular forms in positive
characteristic by taking $F={\FF}_p$. 
One cannot define such modular forms by Fourier series.

\smallskip
We summarize what is known about the rings $R_g(F)$.
It is a classical result that the ring $R_1({\CC})$ is freely generated by 
two Eisenstein series $E_4$ and $E_6$ of weights $4$ and $6$. 
Deligne determined in \cite{Deligne1975} 
the ring $R_1({\ZZ})$ and the rings $R_1({\FF}_p)$. He
showed that
$$
R_1({\ZZ})= {\ZZ}[c_4,c_6,\Delta]
/(c_4^3-c_6^2-1728 \, \Delta) \, ,
$$
where  $\Delta$ is a cusp form of weight $12$  and $c_4$ and $c_6$ are of 
weight $4$ and $6$.
Reduction modulo $p$ gives a surjection of $R_1({\ZZ})$ to $R_1({\FF}_p)$ for $p\geq 5$.
Moreover, Deligne showed that $R_1({\FF}_p)$ 
in characteristic $2$ and $3$ is given by
$$
R_1({\FF}_2)={\FF}_2[a_1,\Delta], \quad
R_1({\FF}_3)={\FF}_2[b_2,\Delta], \quad
$$
where in each case 
$\Delta$ is a cusp form of weight $12$ and $a_1$ (resp.\ $b_2$) is a modular
form of weight $1$ (resp.\ $2$).

In \cite{Igusa1960} Igusa determined the ring
$R_2({\CC})$. He showed that the subring $R_2^{\rm ev}({\CC})$ of even weight
modular forms is generated freely by modular forms of weight $4,6,10$ and $12$
and $R_2({\CC})$ is generated over $R^{\rm ev}_2({\CC})$ by a cusp form of weight
$35$ whose square lies in $R^{\rm ev}_2({\CC})$; see also \cite{Igusa1967}. 
Later (\cite{Igusa1979}) 
he also determined the ring $R_2({\ZZ})$; it has $15$ generators of weights
ranging from $4$ to $48$. 

In characteristic $p\geq 5$ the structure of the rings
$R_2({\FF}_p)$ is similar to that of $R_2({\CC})$, see \cite{N,B-N}; these are 
generated by forms of weight $4,6,10,12$ and $35$.
The structure of $R_2({\FF}_p)$ for $p=2$ and $3$ was determined recently in \cite{C-vdG3,vdG2020}.
All these cases can be dealt with easily using the approach with invariant
theory.
\bigskip

In degree $2$ one can provide the $R_2$-module 
$$
M=\oplus_{j,k} M_{j,k}(\Gamma_2) \qquad \text{\rm with $M_{j,k}(\Gamma_2)=
H^0(\tilde{\mathcal{A}}_2,{\rm Sym}^j({\EE})\otimes \det({\EE})^k)$}
$$
with the structure of a ring using the projection of ${\rm GL}_2$-representations
${\rm Sym}^m(V) \otimes {\rm Sym}^n(V) \to {\rm Sym}^{m+n}(V)$ with $V$ the standard
representation by interpreting ${\rm Sym}^j(V)$ as the space of homogeneous polynomials
of degree $j$ in two variables, say $x_1,x_2$ and performing multiplication
of polynomials. The ring $M$ is not finitely generated as Grundh showed, 
see \cite[p.\ 234]{BGHZ}.
The dimensions of the spaces $S_{j,k}(\Gamma_2)({\CC})$ 
are known by Tsushima\cite{Tsushima} for $k\geq 4$;
for $k=3$ they were obtained independently by Petersen and Ta\"{\i}bi
\cite{Petersen, Taibi}.

For fixed $j$ the $R^{\rm ev}_2({\CC})$-modules 
$$
\oplus_{k} M_{j,2k}(\Gamma_2)({\CC}) \quad \text{\rm and} \quad
\oplus_{k} M_{j,2k+1}(\Gamma_2)({\CC})
$$ are finitely generated modules and their structure has 
been determined in a number of cases by Satoh, Ibukiyama and others, 
see the references in \cite{CFvdG2}. 
Invariant theory makes it easier to obtain such results.
\bigskip

For $g=3$ the results are less complete. 
Tsuyumine showed in 1985 (\cite{Tsuyumine}) that 
the ring $R_3({\CC})$ is generated by $34$ generators.
Recently Lercier and Ritzenthaler showed in \cite{LerRit} that 
$19$ generators suffice.

\end{section}
\begin{section}{Moduli of Curves of Genus two as a Stack Quotient}

We start with $g=2$. Let $F$ be a field of characteristic $\neq 2$ and $V=\langle
x_1,x_2 \rangle$ the $F$-vector space with basis $x_1,x_2$. The algebraic group 
${\rm GL}_2$ acts on $V$ via $(x_1,x_2)\mapsto (ax_1+bx_2,cx_1+dx_2)$ for
$(a,b;c,d) \in {\rm GL}_2(F)$. We will write $V_{j,k}={\rm Sym}^j(V) \otimes \det(V)^{\otimes k}$
for $j\in {\ZZ}_{\geq 0}$ and $k\in {\ZZ}$. This is an irreducible representation
of ${\rm GL}_2$. The underlying vector space can be identified with the space
of homogeneous polynomials of degree $j$ in $x_1,x_2$. We will denote by $V^0_{j,k}$
the open subspace of polynomials with non-vanishing discriminant. 

The moduli space $\mathcal{M}_2$ of smooth projective curves of genus $2$ over $F$
allows a presentation as an algebraic stack
$$
\mathcal{M}_2 \langepijl{\sim} [V^0_{6,-2}/{\rm GL}_2]
$$
Here the action of $(a,b;c,d) \in {\rm GL}_2(F)$ is by $f(x_1,x_2)\mapsto 
(ad-bc)^{-2}f(ax_1+bx_2,cx_1+dx_2)$.

Indeed, if $C$ is a curve of genus $2$ the choice of a basis $\omega_1,\omega_2$
of $H^0(C,K)$ with $K=\Omega_C^1$ defines a canonical map $C \to {\PP}^1$. Let $\iota$ denote the hyperelliptic
involution of $C$. Choosing a non-zero element $\eta \in H^0(C,K^3)^{\iota=-1}$
yields eight elements $\eta^2$, $\omega_1^6,\omega_1^5\omega_2,\ldots,\omega_2^6$
in the $7$-dimensional space $H^0(C,K^6)^{\iota=1}$ and thus a non-trivial
relation.

In inhomogeneous terms this gives us an equation $y^2=f$ with $f\in F[x]$ of degree $6$ with
non-vanishing discriminant. 
The space $H^0(C,K)$ has a basis $xdx/y, dx/y$.
If we let ${\rm GL}_2$ act on $(x,y)$
via $(x,y) \mapsto ((ax+b)/(cx+d),y(ad-bc)/(cx+d)^3)$ then this action preserves
the form of 
the equation $y^2=f$ if we take $f$ in $V_{6,-2}$.
Then $\lambda \, {\rm Id}_V$
acts via $\lambda^2$ on $V_{6,-2}$. 
Thus the stabilizer of a generic element $f$ is of order $2$.
Moreover $- {\rm Id}_V$ acts by $y\mapsto -y$ on $y$ and the action
of ${\rm GL}_2$ on the differentials is by the standard representation.

\begin{conclusion} 
The pull back of the Hodge bundle ${\EE}$ on $\mathcal{M}_2$
under  the composition  
$V_{6,-2}^0\to [V^0_{6,-2}/{\rm GL}_2] \langepijl{\sim} \mathcal{M}_2$ is the
equivariant bundle $V$.
\end{conclusion}

The moduli space $\overline{\mathcal{M}}_2$ can be  constructed from
the projectivized space ${\PP}(V_{6,-2})$  of binary sextics.
The discriminant defines a hypersurface ${\DD}$ whose singular locus
has codimension $1$ in ${\DD}$.
The locus of binary sectics with three coinciding roots forms an
irreducible component ${\DD}'$ of the singular locus.
To illustrate the relation between
${\PP}(V_{6,-2})$ at a general point of ${\DD}'$ and
$\overline{\mathcal{M}}_2$ at a point of the locus $\delta_1$ in
$\overline{\mathcal{M}}_2$
of stable curves whose Jacobian is a product of two 
elliptic curves, 
we reproduce the picture of \cite[p.\ 80]{D-S}. 

\begin{pspicture}(-2,-2)(10,3)
\psecurve[linecolor=red](-1,0)(0,0)(0.3,0.16)(0.5,0.35)(1,1)(2,2.82)
\psecurve[linecolor=red](-1,0)(0,0)(0.3,-0.16)(0.5,-0.35)(1,-1)(2,-2.82)
\psline{<-}(2,0)(2.4,0)
\psline{}(3,-1)(3,1)
\pscurve[linecolor=red](4,-1)(3,0)(4,1)
\psline{<-}(4.5,0)(4.9,0)
\psline(6,-1)(6,1)
\psline[linecolor=red](5.6,-1)(6.4,1)
\psline(6.4,-1)(5.6,1)
\psline{<-}(7,0)(7.4,0)
\psline[linecolor=blue](8,0)(10,0)
\psline[linecolor=red](9,-1)(9,1)
\psline(8.5,-1)(8.5,1)
\psline(9.5,-1)(9.5,1)
\rput(8.3,-1){$E_1$}
\rput(9.8,-1){$E_2$}
\rput(10.2,0){$E_3$}
\end{pspicture}  

Here we look at a plane $\Pi$
intersecting ${\DD}$ transversally at a general point of ${\DD}'$.
One blows up three times, starting at $\Pi \cap {\DD}'$, 
and then blows down the exceptional
divisors $E_1$ and $E_2$; after that
$E_3$ corresponds to the locus $\delta_1$ in $\overline{\mathcal M}_2$;
in $\mathcal{A}_2$ this corresponds to the locus $\mathcal{A}_{1,1}$
of product of elliptic curves.

\end{section}
\begin{section}{Invariant Theory of Binary Sextics}
We review the invariant theory of ${\rm GL}_2$ acting on binary sextics.
Let $V=\langle x_1,x_2 \rangle$ be a $2$-dimensional vector space over a field $F$.
By definition an invariant for the action of ${\rm GL}_2$ acting on the space
${\rm Sym}^6(V)$ of binary sextics is an element invariant under ${\rm SL}_2(F)
\subset {\rm GL}_2(F)$. If we write
$$
f= \sum_{i=0}^6 a_i x_1^{6-i}x_2^i \eqno(2)
$$
for an element of ${\rm Sym}^6(V)$ and thus take $(a_0,\ldots,a_6)$ as coordinates
on ${\rm Sym}^6(V)$ then an invariant is a polynomial in $a_0,\ldots,a_6$
invariant under ${\rm SL}_2(F)$. The discriminant of a binary sextic, a polynomial of
degree $10$ in the $a_i$, is an example. 

For $F={\CC}$ the ring of invariants was determined
by Clebsch, Bolza and others in the 19th century. It is generated by invariants
$A,B,C,D,E$ of degrees $2,4,6,10$ and $15$ in the $a_i$.
Also  for $F={\FF}_p$ we have generators generators of these
degrees. We refer to \cite{Geyer,Igusa1960}. 

A covariant for the
action of ${\rm GL}_2$ on binary sextics is an element of $V \oplus {\rm Sym}^6(V)$
 invariant under the action of ${\rm SL}_2$. Such an element is a polynomial
in $a_0,\ldots,a_6$ and $x_1,x_2$. One way to make such covariants is to consider
equivariant embeddings of an irreducible ${\rm GL}_2$-representation $U$ into
${\rm Sym}^d({\rm Sym}^6(V))$. Equivalently, we consider an equivariant embedding
$$
\varphi: {\CC} \hookrightarrow {\rm Sym}^d({\rm Sym}^6(V)) \otimes U^{\vee} \, .
$$
Then $\Phi=\varphi(1)$ is a covariant. If $U$ has highest weight $(\lambda_1\geq \lambda_2)$
then $\Phi$ is homogeneous of degree $d$ in $a_0,\ldots,a_6$ and degree $\lambda_1-\lambda_2$
in $x_1,x_2$. We say that $\Phi$ has degree $d$ and order $\lambda_1-\lambda_2$.

The simplest example is the universal binary sextic $f$ given by (2); it corresponds to
taking $U={\rm Sym}^6(V)$. 

Another example is the Hessian of $f$. Indeed, we decompose
in irreducible representations
$$
{\rm Sym}^2({\rm Sym}^6(V))=V[12,0]\oplus V[10,2]\oplus V[8,4]\oplus V[6,6]\, ,
$$
where $V[a,b]={\rm Sym}^{a-b}(V) \otimes \det(V)^b$ is the irreducible representation
of highest weight $(a,b)$.
By taking $U=V[12,0]$ we find the covariant 
$\Phi=f^2$ and by taking $U=V[10,2]$ we get the Hessian;
$U=V[6,6]$ gives the invariant $A$.

The covariants form a ring $\mathcal{C}$ and the invariants form a subring $I=I(2,6)$.
The ring of covariants $\mathcal{C}$ was studied intensively at the end of the 19th century
and the beginning of the 20th century. The ring $\mathcal{C}$ is finitely generated
and Grace and Young presented $26$ generators for the ring 
$\mathcal{C}$, see \cite{G-Y}. 
These $26$ covariants are constructed as transvectants by differentiating
in a way similar to the construction of the Hessian. 
The $k$th transvectant of two forms $g\in {\rm Sym}^m(V)$,
$h\in {\rm Sym}^n(V)$ is defined as
$$
(g,h)_k=\frac{(m-k)!(n-k)!}{m!\,  n!}\sum_{j=0}^k (-1)^j
\binom{k}{j}
\frac{\partial^k g}{\partial x_1^{k-j}\partial x_2^j}
\frac{\partial^k h}{\partial x_1^{j}\partial x_2^{k-j}}
$$
and the index $k$ is usually omitted if $k=1$. Examples of the generators
are $C_{1,6}=f$, $C_{2,0}=(f,f)_6$, $C_{2,4}=(f,f)_4$, $C_{3,2}=(f,C_{2,4})_4$.
We refer to \cite{CFvdG2} for a
list of these $26$ generators.
\end{section}
\begin{section}{Covariants of Binary Sextics and Modular Forms}
The Torelli morphism induces an embedding $\mathcal{M}_2 \hookrightarrow
\mathcal{A}_2$. The complement of the image is the locus $\mathcal{A}_{1,1}$
of products of elliptic curves. As a compactification we can take $\tilde{\mathcal{A}}_2=
\overline{\mathcal{M}}_2$. 

We now fix the field $F$ to be ${\CC}$ or a finite prime field ${\FF}_p$.

In the Chow ring ${\rm CH}_{\QQ}^*(\tilde{\mathcal A}_2)\otimes F$
we have the cycle relation
$$
10 \lambda_1= 2[\mathcal{A}_{1,1}]+[D]
$$
with $\lambda_1=c_1({\EE})$ the first Chern class of ${\EE}$
and $D$ the divisor that compactifies $\mathcal{A}_2 \otimes F$. This implies
that there exists a modular form of weight $10$ with divisor 
$2\, \mathcal{A}_{1,1}+D$, hence a cusp form.  
It is well-defined up to a non-zero multiplicative constant. We will 
normalize it later. We denote it by $\chi_{10}\in R_2(F)$. 

\smallskip
We let $V$ be the $F$-vector space with basis $x_1,x_2$.
The fact that the pullback of the Hodge bundle ${\EE}$ under
$$
V_{6,-2}^0 \to [V_{6,-2}^0/{\rm GL}_2] \to \mathcal{M}_2 \otimes F 
\hookrightarrow \mathcal{A}_2 \otimes F \eqno(3)
$$
is the equivariant bundle $V$ implies that a section of 
$L^k=\det({\EE})^k$ pulls back
to an invariant of degree $k$. We thus get an embedding of the ring of scalar-valued
modular forms of degree $2$ into the ring of invariants
$$
R_2(F) \hookrightarrow I(2,6)(F) \, .
$$
Conversely, an invariant of degree $d$ defines a section of $L^d$ on 
$\mathcal{M}_2 \otimes F$, hence
a rational (meromorphic) modular form of weight $d$ 
that is holomorphic outside $\mathcal{A}_{1,1}\otimes F$.
By multiplying it with an appropriate power of $\chi_{10}$ it becomes holomorphic on
$\mathcal{A}_2\otimes F$, hence on all of $\tilde{\mathcal{A}}_2 \otimes F$. 
We thus get maps
$$
R_2(F) \hookrightarrow I(2,6)(F) \langepijl{\nu} R_2(F)[1/\chi_{10}]
\eqno(4)
$$
the composition of which is the identity on $R_2(F)$.

From the description of the moduli $\overline{\mathcal{M}}_2$ given above 
one sees that the image of a cusp form is an invariant
divisible by the discriminant $D$. 
The image of $\chi_{10}$ is a non-zero multiple of the discriminant $D$. 
We may fix $\chi_{10}$ by requiring that $\nu(D)=\chi_{10}$.

\bigskip
This extends to the case of vector-valued
modular forms. Let 
$$
M(F)=\oplus_{j,k} M_{j,k}(\Gamma_2)(F)
$$
denote the ring of vector-valued modular forms of degree $2$. 
\begin{proposition}
Pullback via (3) defines homomorphisms
$$
M(F)  \hookrightarrow \mathcal{C}(2,6)(F) \langepijl{\nu} M(F)[1/\chi_{10}]\, ,
$$
the composition of which is the identity.
\end{proposition}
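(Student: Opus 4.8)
The plan is to mimic, in the vector-valued setting, the three maps already constructed in the scalar case (4), promoting the line bundle $L^k=\det(\EE)^k$ to the bundle $\Sym^j(\EE)\otimes\det(\EE)^k$ and the ring of invariants $I(2,6)$ to the ring of covariants $\mathcal C(2,6)$. First I would use the Conclusion: along the composition (3), the pullback of $\EE$ is the equivariant bundle $V$, so the pullback of $\Sym^j(\EE)\otimes\det(\EE)^k$ is the equivariant bundle $V_{j,k}=\Sym^j(V)\otimes\det(V)^{\otimes k}$. Hence a section of $\Sym^j(\EE)\otimes\det(\EE)^k$ over $\tilde{\mathcal A}_2\otimes F$ pulls back to a ${\rm GL}_2$-equivariant section of $V_{j,k}$ over $V_{6,-2}^0$, i.e. a polynomial in $a_0,\dots,a_6,x_1,x_2$ that is homogeneous of degree $j$ in the $x_i$ and transforms correctly under ${\rm SL}_2$ — in other words, a covariant of order $j$ (and some degree $d$ in the $a_i$) in $\mathcal C(2,6)(F)$. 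The determinant twist by $\det(V)^{\otimes k}$ is exactly what pins down the degree $d$: since $\lambda\,{\rm Id}_V$ acts by $\lambda^2$ on $V_{6,-2}$, a covariant of degree $d$ and order $j$ corresponds to weight $(j,k)$ with $k=(3d-j)/2$ plus the shift coming from the $\det(\EE)$-twist, and I would record this bookkeeping precisely. This gives the first arrow $M(F)\hookrightarrow\mathcal C(2,6)(F)$; injectivity is immediate because $\mathcal M_2$ is dense in $\tilde{\mathcal A}_2$ (its complement $\mathcal A_{1,1}$ has codimension $\geq 1$ and a modular form vanishing on a dense open vanishes).

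Next I would build $\nu$. Given a covariant $\Phi$ of degree $d$ and order $j$, substituting the coordinates of the universal sextic gives, via (3), a section of $\Sym^j(\EE)\otimes\det(\EE)^k$ over $\mathcal M_2\otimes F$, hence a meromorphic vector-valued modular form holomorphic away from $\mathcal A_{1,1}$. Exactly as in the scalar case, multiplying by a suitable power $\chi_{10}^N$ clears the poles along $\mathcal A_{1,1}$ (using the cycle relation $10\lambda_1=2[\mathcal A_{1,1}]+[D]$ and the fact that $\chi_{10}$ has divisor $2\,\mathcal A_{1,1}+D$); since $j>1$ implies the Koecher principle holds, the resulting section automatically extends over the boundary $D$ of $\tilde{\mathcal A}_2$, so it is a genuine element of $M(F)[1/\chi_{10}]$. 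To make $\nu$ a ring homomorphism I must check it respects products: the ring structure on $M(F)$ is induced by the projection $\Sym^m(V)\otimes\Sym^n(V)\to\Sym^{m+n}(V)$ (polynomial multiplication in $x_1,x_2$), and this is precisely the multiplication in the ring of covariants $\mathcal C(2,6)$, so compatibility is formal once the identification is set up correctly. Likewise the first arrow is a ring homomorphism for the same reason.

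Finally, the composition $M(F)\to\mathcal C(2,6)(F)\to M(F)[1/\chi_{10}]$ is the identity: if $f\in M_{j,k}(\Gamma_2)(F)$ pulls back to the covariant $\Phi_f$, then re-substituting the universal sextic into $\Phi_f$ recovers $f$ on the dense open $\mathcal M_2\otimes F$ (this is a tautology, since pulling back via (3) and then re-reading the equivariant section over $V_{6,-2}^0$ as a section downstairs are mutually inverse for sections that were already defined on $\tilde{\mathcal A}_2$); no power of $\chi_{10}$ is needed, and the extension over the boundary is unique, so the two sections agree everywhere. I expect the only real subtlety to be the precise normalization of the $\det(\EE)$-twists and the correspondence between the weight $k$ and the covariant degree $d$ — i.e. making sure the identification $\EE\leftrightarrow V$ is used with the correct power of the Igusa-type scaling $\lambda\,{\rm Id}_V\mapsto\lambda^2$ on $V_{6,-2}$ — together with the check that $\chi_{10}^N\Phi$ is holomorphic, not merely meromorphic, across $\mathcal A_{1,1}$; both are bounded-order computations rather than conceptual obstacles, and the scalar case in (4) is the exact template.
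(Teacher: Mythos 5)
Your proposal follows essentially the same route as the paper: the identification of the pulled-back Hodge bundle with the equivariant bundle $V$ (so sections of ${\rm Sym}^j({\EE})\otimes\det({\EE})^k$ pull back to covariants), pole-clearing along $\mathcal{A}_{1,1}$ by a power of $\chi_{10}$ with divisor $2\,\mathcal{A}_{1,1}+D$, the Koecher principle for the boundary (which applies because the degree $g=2>1$, not because $j>1$), and the tautological identity of the composition — exactly the scalar template (4) promoted to $\mathcal{C}(2,6)$. The only bookkeeping slip is the weight count: for a covariant of degree $d$ and order $j$ the untwisted determinant exponent is $(6d-j)/2$, not $(3d-j)/2$, so after the $\det(V)^{-2}$-per-degree twist one gets weight $(j,\,d-j/2)$, i.e.\ $d=j/2+k$ as recorded in the paper.
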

A modular form of weight $(j,k)$ corresponds to a covariant 
of degree $d=j/2+k$ and order $j$. A covariant of degree $d$ and order $r$
gives rise to a meromorphic modular form of weight $(r,d-r/2)$.

The most basic covariant is the universal binary sextic $f$.
By construction $\nu(f)$ is a meromorphic modular form of weight $(6,-2)$.
Therefore the central question is: {\it Which rational modular form is $\nu(f)$ ?}

\bigskip

Let $\mathcal{A}_{1,1} \subset \mathcal{A}_2$ be the locus of products of elliptic curves. Under the map
$$
\mathcal{A}_1 \times \mathcal{A}_1 \to \mathcal{A}_{1,1} \to \mathcal{A}_2
$$
the pullback of the Hodge bundle ${\EE}={\EE}^{(2)}$ is 
$p_1^* {\EE}^{(1)} \oplus p_2^*{\EE}^{(1)}$ with $p_1$ and $p_2$
the projections of $\mathcal{A}_1 \times \mathcal{A}_1$ on its factors.
The pullback of an element $h \in M_{j,k}(\Gamma_2)$ thus can be
indentified with an element of
$$
\bigoplus_{i=0}^j M_{k+j-i}(\Gamma_1)\otimes M_{k+i}(\Gamma_1) \, .
$$
Near a point of $\mathcal{A}_{1,1}$ 
we can write such an element symbolically as 
$$
h= \sum_{i=0}^j \eta_j X_1^{j-i}X_2^i\, ,
$$
where the $X_i$ are dummy variables to indicate the vector coordinates,
and such that the coefficient $\eta_j$
defines the element of $M_{k+j-i}(\Gamma_1)\otimes M_{k+i}(\Gamma_1)$.

In particular we have
$$
\nu(f)= \sum_{i=0}^6 \alpha_i X_1^{6-i}X_2^i\, ,
$$
where $\alpha_i$ are rational functions near a point of $\mathcal{A}_{1,1}$.
By interchanging $x_1$ and $x_2$
(that corresponds to the element
$\gamma \in \Gamma_2$ that interchanges $e_1$ and $e_2$)
we see that $\alpha_{6-i}=\alpha_i$ for $i=0,\ldots,3$.
\begin{proposition}\label{prop68}
If ${\rm char}(F)\neq 2$ and $\neq 3$, then $\dim S_{6,8}(\Gamma_2)(F)=1$
and $\chi_{10} \nu(f)$ is a generator of $S_{6,8}(\Gamma_2)(F)$.
\end{proposition}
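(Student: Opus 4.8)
The plan is to attack the two assertions separately: first establish that $\dim S_{6,8}(\Gamma_2)(F) = 1$, then identify $\chi_{10}\,\nu(f)$ as a nonzero element of that space.

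For the dimension statement over $F = \CC$, I would invoke Tsushima's dimension formula (cited in the excerpt), which gives $\dim S_{j,k}(\Gamma_2)(\CC)$ for $k \geq 4$; plugging in $(j,k) = (6,8)$ yields $1$. For the prime fields $\FF_p$ with $p \geq 5$, the cleanest route is to argue that the relevant cohomology has no torsion in this range, so that $\dim S_{6,8}(\Gamma_2)(\FF_p) = \dim S_{6,8}(\Gamma_2)(\CC)$; alternatively one can cite the comparison results on $R_2(\FF_p)$ for $p \geq 5$ referenced earlier (\cite{N,B-N}) together with a vanishing statement for the Siegel operator $\Phi$ in this weight. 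This base-change/torsion-freeness step is the one place where the hypothesis ${\rm char}(F) \neq 2, 3$ is genuinely used, and it is the part I expect to be the main obstacle: one must be careful that small primes do not introduce extra modular forms mod $p$, and the honest justification may require either Tsushima-type arithmetic input or an explicit exhibition of generators of the module of covariants reduced mod $p$.

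Next I would show $\chi_{10}\,\nu(f)$ is a genuine holomorphic cusp form of weight $(6,8)$. By Proposition (the one preceding \ref{prop68}), $\nu(f)$ is a meromorphic modular form of weight $(6,-2)$, holomorphic outside $\mathcal{A}_{1,1}$, and the pole along $\mathcal{A}_{1,1}$ is controlled: since $\chi_{10}$ vanishes to order $2$ along $\mathcal{A}_{1,1}$ (its divisor is $2\,\mathcal{A}_{1,1} + D$) and to order $1$ along $D$, and since $f$ corresponds under pullback to the equivariant bundle $V$ whose worst pole along the boundary $\DD'$ is simple — this is exactly the content of the blow-up/blow-down picture reproduced from \cite{D-S}, where the universal sextic acquires at most a simple zero/pole transverse to $\DD'$ — the product $\chi_{10}\,\nu(f)$ extends holomorphically across $\mathcal{A}_{1,1}$. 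It is then automatically a section of ${\rm Sym}^6(\EE) \otimes \det(\EE)^8$ over $\mathcal{A}_2 \otimes F$, hence over all of $\tilde{\mathcal{A}}_2 \otimes F$ by the Koecher principle; and because $\chi_{10}$ contributes a factor vanishing along $D$, the product is a cusp form.

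Finally I would show $\chi_{10}\,\nu(f) \neq 0$. The quickest argument: $\nu$ composed with pullback via (3) is the identity on $M(F)$ (Proposition before \ref{prop68}), so $\nu(f)$ is nonzero as a meromorphic form; multiplying by the nonzero form $\chi_{10}$ keeps it nonzero. Combined with the one-dimensionality of $S_{6,8}(\Gamma_2)(F)$, this forces $\chi_{10}\,\nu(f)$ to be a generator. A more hands-on cross-check, useful for pinning down normalizations, is to use the restriction to $\mathcal{A}_{1,1}$: write $\nu(f) = \sum_{i=0}^6 \alpha_i X_1^{6-i}X_2^i$ with $\alpha_{6-i} = \alpha_i$; one computes that the leading coefficient $\alpha_0$ near $\mathcal{A}_{1,1}$ is, up to a constant and the local equation of $\mathcal{A}_{1,1}$, the Eisenstein-type product $E_4 \otimes E_4$ or a similar explicit tensor, and multiplying by $\chi_{10}$ (whose restriction to $\mathcal{A}_{1,1}$ vanishes to order $2$, matching the double pole of $\alpha_0$) gives a nonzero boundary value — confirming non-vanishing directly and simultaneously exhibiting the image under the appropriate restriction map.
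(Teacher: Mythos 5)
Your proposal has two genuine gaps, both at points you partly flag yourself. First, the dimension statement over $\FF_p$: semi-continuity gives only $\dim S_{6,8}(\Gamma_2)(\FF_p)\geq \dim S_{6,8}(\Gamma_2)(\CC)=1$, and the reverse inequality does not follow from a vague appeal to absence of torsion, nor from the comparison results you cite, which concern scalar-valued forms; mod $p$ there can a priori be extra forms (exactly what does happen in small characteristic, e.g.\ Hasse invariants). The paper avoids this issue entirely: semi-continuity is used only for the lower bound, and the upper bound is obtained by invariant theory, uniformly over any $F$ with ${\rm char}(F)\neq 2,3$ --- the image of any nonzero cusp form of weight $(6,8)$ in $\mathcal{C}(2,6)(F)$ is a covariant of degree $11$ and order $6$ which, being the image of a cusp form, is divisible by the discriminant (of degree $10$); the quotient is a covariant of degree $1$ and order $6$, hence a nonzero multiple of the universal sextic $f$, so every element of $S_{6,8}(\Gamma_2)(F)$ is a multiple of $\chi_{10}\nu(f)$. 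This one argument simultaneously gives $\dim\leq 1$ and the holomorphy of $\chi_{10}\nu(f)$, with no a priori pole bound on $\nu(f)$ required.

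Second, your direct holomorphy argument rests on the claim that $\nu(f)$ has at most a simple pole along $\mathcal{A}_{1,1}$, justified only by the blow-up/blow-down picture from \cite{D-S}; that picture is illustrative and computes no pole orders, so the crucial quantitative step is asserted rather than proved. In the paper this bound is a \emph{consequence} (Corollary \ref{order}) of the proposition, obtained by first showing that any $h\in S_{6,8}(\Gamma_2)(F)$ vanishes on $\mathcal{A}_{1,1}$ and then, using that the conormal bundle of $\mathcal{A}_{1,1}$ pulls back to the tensor product of the genus-one Hodge bundles together with $S_k(\Gamma_1)(F)=0$ for $k<12$, that ${\rm ord}_{\mathcal{A}_{1,1}}(\eta_0,\dots,\eta_6)\geq(4,3,2,1,2,3,4)$, before dividing by $\chi_{10}$. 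Your proposed cross-check is also wrong in detail: by that corollary $\alpha_0$ has a zero of order $\geq 2$ along $\mathcal{A}_{1,1}$, not a double pole; the simple pole sits in the middle coefficient $\alpha_3$, and the relevant boundary data involve weight-$12$ cusp forms for $\Gamma_1$, not anything like $E_4\otimes E_4$.
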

\begin{proof} We shall use that $\dim S_{6,8}(\Gamma_2)({\CC})\geq 1$.
Indeed, we know an explicit cusp form of weight $(6,8)$, see below.
(Alternatively, we know the dimensions of 
$S_{j,k}(\Gamma_2)({\CC})$ 
for $k\geq 4$, see \cite{Tsushima}; in particular we know 
$\dim S_{6,8}(\Gamma_2)({\CC})=1$.) 
By semi-continuity this implies that $\dim S_{6,8}(\Gamma_2)(F) \geq 1$.

The restriction of an element of $S_{6,8}(\Gamma_2)(F)$  
to the locus $\mathcal{A}_{1,1}\otimes F$ lands in
$$
\bigoplus_{i=0}^6 S_{8+6-i}(\Gamma_1)(F) \otimes S_{8+i}(\Gamma_1)(F) \, ,
$$
and as we have $\dim S_k(\Gamma_1)(F)=0$ for $k<12$ 
it vanishes on $\mathcal{A}_{1,1} \otimes F$.

The tangent space to $\mathcal{A}_2$ at a point $[X=X_1\times X_2]$
of $\mathcal{A}_{1,1}$, with $X_i$ elliptic curves, can be identified with 
$$
{\rm Sym}^2(T_X)= {\rm Sym}^2(T_{X_1}) \oplus (T_{X_1}\otimes T_{X_2}) 
\oplus {\rm Sym}^2(T_{X_2})
$$
with $T_X$ (resp $T_{X_i}$) the tangent space at the origin of $X$ (resp.\ $X_i$),
and with the middle term corresponding to the normal space.
Thus we see that the pullback of the conormal bundle of $\mathcal{A}_{1,1}$ to
$\mathcal{A}_1 \times \mathcal{A}_1$ is the tensor product of the
pullback of the Hodge bundles on the two factors $\mathcal{A}_1$. 

Let $h\in S_{6,8}(\Gamma_2)(F)$ and write $h$ as
$$
h=\sum_{i=0}^6 \eta_i \, X_1^{6-i}X_2^i
$$
locally at a general point of $\mathcal{A}_{1,1}\otimes F$.
If we consider the Taylor development in the normal direction of
$\mathcal{A}_{1,1}$ of the form $h$ that
vanishes on $\mathcal{A}_{1,1} \otimes F$ then the first non-zero 
Taylor term of $\eta_i$, say the $r$th term,  is an element of 
$$
S_{14-i+r}(\Gamma_1)(F) \otimes S_{8+i+r}(\Gamma_1)(F) \, .
$$
Since $S_k(\Gamma_1)(F)=(0)$ for $k<12$, 
a non-zero $r$th Taylor term of $\eta_i$ can occur only for  
$14-i+r\geq 12$ and $8+i+r\geq 12$. 
We thus find:
$$
{\rm ord}_{\mathcal{A}_{1,1}}(\eta_0,\ldots,\eta_6)
\geq (4,3,2,1,2,3,4)\, .
$$
\begin{lemma}
We have ${\rm ord}_{\mathcal{A}_{1,1}}(\eta_3)=1$.
\end{lemma}
\begin{proof} If ${\rm ord}_{\mathcal{A}_{1,1}}(\eta_3)\geq 2$ then
$h/\chi_{10}$ is a regular
form in $S_{6,-2}(\Gamma_2)$ and we write it as
$h/\chi_{10}=\sum_{i=0}^6 \xi_i\,  X_1^{6-i}X_2^i$
with $\xi_i=\eta_i/\chi_{10}$ regular. Then the invariant
$A=120 \, a_0a_6-20\, a_1a_5+8\, a_2a_4 -3 \, a_3^2$ defines 
a non-zero regular modular form 
$$
\nu(A)=120 \, \xi_0\xi_6 -20 \, \xi_1\xi_5 +8\, \xi_2\xi_4 -3\, \xi_3^2
$$
in $M_2(\Gamma_2)(F)$. 
But restriction to $\mathcal{A}_{1,1}$ gives for even $k$ an exact sequence
$$
0\to M_{k-10}(\Gamma_2)(F) \to M_k(\Gamma_2)(F) \to 
{\rm Sym}^2(M_k(\Gamma_1)(F)) \eqno(4)
$$ 
with the second arrow multiplication by $\chi_{10}$. This implies that
$\dim M_2(\Gamma_2)(F)=0$ for ${\rm char}(F)\neq 2$ and $\neq 3$.
This proves the lemma. \end{proof}

The image of a non-zero element $\chi_{6,8}$ of $S_{6,8}$  
in $\mathcal{C}(2,6)$ is 
a covariant of degree $11$ and order $6$. 
But since $\chi_{6,8}$ is a cusp form, 
this covariant is divisible
by the discriminant which is of degree $10$. 
Therefore, $\chi_{6,8}/\chi_{10}$
corresponds to a covariant of degree $1$, hence is a non-zero multiple of $f$.
This implies that $\dim S_{6,8}(\Gamma_2)(F)=1$. 
\end{proof}

\begin{corollary}\label{order}
If we write $\nu(f)=\sum_{i=0}^6 \alpha_i X_1^{6-i}X_2^i$ then
$$
{\rm ord}_{\mathcal{A}_{1,1}}(\alpha_0,\ldots,\alpha_6)\geq (2,1,0,-1,0,1,2)
$$
and ${\rm ord}_{\mathcal{A}_{1,1}}(\alpha_3)=-1$.
\end{corollary}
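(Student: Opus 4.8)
The plan is to read off the corollary from Proposition~\ref{prop68} together with the order bounds produced along the way in its proof. Proposition~\ref{prop68} tells us that $\chi_{6,8}:=\chi_{10}\,\nu(f)$ is a nonzero generator of the one-dimensional space $S_{6,8}(\Gamma_2)(F)$; in particular it is a genuine holomorphic cusp form of weight $(6,8)$. Expanding it at a general point of $\mathcal{A}_{1,1}\otimes F$ as $\chi_{6,8}=\sum_{i=0}^6\eta_i\,X_1^{6-i}X_2^i$ in the notation set up before Proposition~\ref{prop68}, we have $\eta_i=\chi_{10}\,\alpha_i$, hence ${\rm ord}_{\mathcal{A}_{1,1}}(\alpha_i)={\rm ord}_{\mathcal{A}_{1,1}}(\eta_i)-{\rm ord}_{\mathcal{A}_{1,1}}(\chi_{10})$ for every $i$.

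The remaining ingredients are already in place. First, the Taylor-expansion argument in the normal direction of $\mathcal{A}_{1,1}$ used in the proof of Proposition~\ref{prop68}, together with $S_k(\Gamma_1)(F)=(0)$ for $k<12$, gives ${\rm ord}_{\mathcal{A}_{1,1}}(\eta_0,\ldots,\eta_6)\geq(4,3,2,1,2,3,4)$, and the Lemma there sharpens this to ${\rm ord}_{\mathcal{A}_{1,1}}(\eta_3)=1$. Second, by its defining divisor $2\,\mathcal{A}_{1,1}+D$ the form $\chi_{10}$ vanishes to order exactly $2$ along $\mathcal{A}_{1,1}$, so ${\rm ord}_{\mathcal{A}_{1,1}}(\chi_{10})=2$. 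Subtracting $2$ from each entry of $(4,3,2,1,2,3,4)$ yields the asserted inequality $(2,1,0,-1,0,1,2)$ for $(\alpha_0,\ldots,\alpha_6)$, and subtracting $2$ from ${\rm ord}_{\mathcal{A}_{1,1}}(\eta_3)=1$ gives ${\rm ord}_{\mathcal{A}_{1,1}}(\alpha_3)=-1$.

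I do not expect any genuine obstacle: once Proposition~\ref{prop68} and its Lemma are available, the corollary is pure bookkeeping. The only point worth stating carefully is that the orders are taken at a general point of the divisor $\mathcal{A}_{1,1}$ and that the coefficientwise expansion $h=\sum\eta_i X_1^{6-i}X_2^i$ is the one compatible with the identification of the conormal bundle of $\mathcal{A}_{1,1}$ with the tensor product of the two Hodge bundles on $\mathcal{A}_1\times\mathcal{A}_1$; but this is exactly the setup already used for Proposition~\ref{prop68}, so nothing new is needed, and the symmetry $\alpha_{6-i}=\alpha_i$ noted earlier is consistent with the symmetric vector $(2,1,0,-1,0,1,2)$.
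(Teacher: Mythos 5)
Your derivation is correct and is exactly the bookkeeping the paper intends: the corollary follows from the proof of Proposition~\ref{prop68} (the Taylor bounds $(4,3,2,1,2,3,4)$ and the lemma giving ${\rm ord}_{\mathcal{A}_{1,1}}(\eta_3)=1$) applied to the generator $\chi_{10}\,\nu(f)$ of $S_{6,8}$, divided by $\chi_{10}$, whose divisor $2\,\mathcal{A}_{1,1}+D$ gives order exactly $2$ along $\mathcal{A}_{1,1}$. No gaps; this matches the paper's argument.
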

\end{section}
\begin{section}{Constructing vector-valued modular forms of degree $2$}
Now that we know $\nu(f)$  by Proposition \ref{prop68}
we can describe the map $\nu: \mathcal{C}(2,6) \to M[1/\chi_{10}]$ 
explicitly.
Recall that a covariant is a polynomial in $a_0,\ldots,a_6$ 
and $x_1,x_2$. We arrive at the following conclusion.

\begin{proposition}\label{substitution}
The map $\nu: \mathcal{C}(2,6)\to M[1/\chi_{10}]$
is substitution of $\alpha_i$ for $a_i$ (and $X_i$ for $x_i$).
\end{proposition}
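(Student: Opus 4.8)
The plan is to exploit the fact that both $\nu$ and the substitution map are $F$-algebra homomorphisms out of $\mathcal{C}(2,6)$, so it suffices to check that they agree on a set of algebra generators — and in fact, since $\mathcal{C}(2,6)$ is generated as an algebra by the $26$ Grace–Young covariants, and each of those is itself built by iterated transvectants from the universal sextic $f$, the whole question reduces to a single case. First I would recall from Proposition~\ref{prop68} and Corollary~\ref{order} that $\nu(f) = \sum_i \alpha_i X_1^{6-i}X_2^i$, and observe that, by definition, "substitution of $\alpha_i$ for $a_i$" sends $f = \sum_i a_i x_1^{6-i} x_2^i$ to exactly $\sum_i \alpha_i X_1^{6-i} X_2^i = \nu(f)$. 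So the two maps agree on the generator $f$.

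The second step is to argue that both maps are ring homomorphisms compatible with the transvectant operations used to produce the remaining generators. For $\nu$ this is Proposition~\ref{substitution}'s companion statement (the proposition just before it): $\nu$ is a ring homomorphism $\mathcal{C}(2,6)(F) \to M(F)[1/\chi_{10}]$, where the product on $M(F)$ comes from the projections $\mathrm{Sym}^m(V)\otimes \mathrm{Sym}^n(V) \to \mathrm{Sym}^{m+n}(V)$ described in Section~2. The key point is that a transvectant $(g,h)_k$ is obtained from the tensor product $g \otimes h$ by applying an explicit $\mathrm{GL}_2$-equivariant contraction (a Clebsch–Gordan projection onto the irreducible summand $V[\,m+n-k, k\,]$ of $\mathrm{Sym}^m(V)\otimes\mathrm{Sym}^n(V)$). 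Because the pullback of $\EE$ is the equivariant bundle $V$ (the Conclusion in Section~3), $\nu$ intertwines these equivariant projections on the covariant side with the corresponding projections of $\mathrm{GL}_2$-representations on the modular-forms side. The same is true, tautologically, for the substitution map, since substituting $\alpha_i$ for $a_i$ is an $F$-algebra map that commutes with the differential operators $\partial/\partial x_1, \partial/\partial x_2$ once one also substitutes $X_i$ for $x_i$.

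Combining these: both $\nu$ and the substitution map are $F$-algebra homomorphisms on $\mathcal{C}(2,6)$, both respect transvectants, and both send $f$ to $\nu(f)$; since $f$ generates $\mathcal{C}(2,6)$ under the algebra and transvectant operations, the two maps coincide on all of $\mathcal{C}(2,6)$. One also checks the scalar case separately for bookkeeping — an invariant is a covariant of order $0$, and the restriction of this argument recovers the maps in~(4) — but this is subsumed in the general statement.

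The main obstacle I expect is making precise the claim that $\nu$ "respects transvectants," i.e.\ that the meromorphic continuation $\nu$ genuinely commutes with the equivariant contraction operators and not merely with tensor multiplication. This requires knowing that the isomorphism "pullback of $\EE$ equals $V$" is an isomorphism of $\mathrm{GL}_2$-equivariant bundles (which Section~3 asserts) and then tracking that the Clebsch–Gordan projections are themselves polynomial in the coordinates $a_i$ with constant (field) coefficients, so that they are visibly preserved under substitution of the $\alpha_i$. Once that naturality is in hand, the identification of the two homomorphisms on the single generator $f$ — already done in step one — finishes the proof with no further computation.
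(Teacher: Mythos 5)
Your argument is essentially correct over $\CC$, but it takes a genuinely different and more roundabout route than the paper. The paper does not prove Proposition \ref{substitution} by a generators argument at all: it reads the statement off from the definition of $\nu$. The map $\nu$ is induced by pullback along (3), under which the Hodge bundle ${\EE}$ is identified equivariantly with $V$; a covariant $c$ is an ${\rm SL}_2$-equivariant polynomial in $a_0,\ldots,a_6$ and $x_1,x_2$, and $\nu(c)$ is by construction the evaluation of $c$ on the universal sextic expressed in modular terms, i.e.\ on $\nu(f)=\sum_i \alpha_i X_1^{6-i}X_2^i$, whose coefficients are exactly the $\alpha_i$ of Proposition \ref{prop68} and Corollary \ref{order}. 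Thus substitution holds for each covariant separately, with no need to know how $\mathcal{C}(2,6)$ is generated. Your route requires two extra inputs: (a) that $\nu$ commutes with the Clebsch--Gordan contractions defining transvectants, and (b) the Gordan/Grace--Young theorem that $\mathcal{C}(2,6)$ is generated by iterated transvectants of $f$. Concerning (a), you correctly identify it as the crux, but once it is made precise it is exactly the same equivariance/naturality statement that yields the proposition directly for an arbitrary covariant, so the reduction to generators buys nothing; and your opening claim that everything ``reduces to a single case'' is only true after (a) is in place, since agreement on $f$ together with multiplicativity alone determines $\nu$ only on polynomials in $f$. Concerning (b), this is a deep classical fact valid in characteristic $0$, and the transvectant formula carries factorials in its denominators; since the paper uses $\nu_F$ and the substitution description also over ${\FF}_p$ (it feeds into the scalar-ring results of the next section, including small characteristic), your proof as written covers only $F={\CC}$, whereas the paper's definitional argument is uniform in $F$. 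So: a valid alternative over $\CC$ modulo making (a) precise, but heavier machinery and narrower scope than the paper's essentially tautological argument.
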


In order to efficiently apply the proposition we
need to know the coordinates of a generator $\chi_{6,8}$ 
of $S_{6,8}$ very explicity. 

\begin{remark}
If $F\neq {\FF}_2$ the
 moduli space $\mathcal{A}_2[2]$ of level $2$ is a Galois cover of $\mathcal{A}_2$
with group ${\rm Sp}(2,{\ZZ}/2{\ZZ})$. This group is isomorphic to the symmetric group
$\mathfrak{S}_6$. The sign character of $\mathfrak{S}_6$ defines a character $\epsilon$
of $\Gamma_2$. The pullback of $\chi_{10}$ under
$\pi: \mathcal{A}_2[2]\to \mathcal{A}_2$ is a square $\chi_5^2$
since the pullback of $D$  under $\tilde{\mathcal{A}}_2[2]\to \tilde{\mathcal{A}}_2$
is divisible by $2$ as a divisor.
Thus  $\chi_5$ is a modular form of weight $5$ with character $\epsilon$.
\end{remark}

Let now $F={\CC}$. Recall that $\chi_{6,8}$ vanishes on $\mathcal{A}_{1,1}$. 
Dividing $\chi_{6,8}$  by $\chi_5$ provides a holomorphic 
vector-valued modular form $\chi_{6,3}
\in M_{6,3}(\Gamma_2,\epsilon)(F)$ with character $\epsilon$. 
Such a form can be constructed as follows.

We consider the six odd order two theta functions 
$\vartheta_i(\tau,z)$ with $(\tau,z)\in \mathfrak{H}_2\times {\CC}^2$. 
The gradient 
$G_i=(\partial \vartheta_i/\partial z_1, \partial \vartheta_i/\partial z_2)(\tau,0)$
is a modular form of weight $(1,1/2)$ on some congruence subgroup, but
the product of the transposes of these six gradients
defines a vector-valued modular form of weight $(6,3)$ on $\Gamma_2$ 
with character $\epsilon$.
The product $\chi_{6,8}=\chi_5 \chi_{6,3}$ is a cusp form of weight $(6,8)$ 
on $\Gamma_2$. A non-zero multiple of its
Fourier expansion starts with
(with $q_1=e^{2\pi i \tau_{11}}$,
$q_2=e^{2\pi i \tau_{22}}$ and $r=e^{2\pi i \tau_{12}}$)

\begin{align*}
\chi_{6,8}(\tau)=&
\left(
\begin{smallmatrix}
0\\
0\\
r^{-1}-2+r\\
2(r-r^{-1})\\
r^{-1}-2+r\\
0\\
0
\end{smallmatrix}
\right)
q_1q_2+
\left(
\begin{smallmatrix}
0\\
0\\
-2(r^{-2}+8r^{-1}-18+8r+r^2)\\
8(r^{-2}+4r^{-1}-4r-r^2)\\
-2(7r^{-2}-4r^{-1}-6-4r+7r^2)\\
12(r^{-2}-2r^{-1}+2r-r^2)\\
-4(r^{-2}-4r^{-1}+6-4r+r^2)
\end{smallmatrix}
\right)
q_1q_2^2\\
&+
\left(
\begin{smallmatrix}
-4(r^{-2}-4r^{-1}+6-4r+r^2)\\
12(r^{-2}-2r^{-1}+2r-r^2)\\
-2(7r^{-2}-4r^{-1}-6-4r+7r^2)\\
8(r^{-2}+4r^{-1}-4r-r^2)\\
-2(r^{-2}+8r^{-1}-18+8r+r^2)\\
0\\
0
\end{smallmatrix}
\right)
q_1^2q_2
+
\left(
\begin{smallmatrix}
16(r^{-3}-9r^{-1}+16-9r+r^3)\\
-72(r^{-3}-3r^{-1}+3r-r^3)\\
+128(r^{-3}-2+r^3)\\
-144(r^{-3}+5r^{-1}-5r-r^3)\\
+128(r^{-3}-2+r^3)\\
-72(r^{-3}-3r^{-1}+3r-r^3)\\
16(r^{-3}-9r^{-1}+16-9r+r^3)\\
\end{smallmatrix}
\right)
q_1^2q_2^2
+\dots\\
\end{align*}

Proposition \ref{substitution} provides an extremely effective way of constructing 
complex vector-valued Siegel modular forms of degree $2$. 
Let us give a few examples. In the decomposition
$$
{\rm Sym}^2({\rm Sym}^6(V))=V[12,0]\oplus V[10,2]\oplus V[8,4]\oplus V[6,6]
$$  
of ${\rm Sym}^2({\rm Sym}^6(V))$
the covariant $H$ defined by $V[10,2]$ is the Hessian and 
by Corollary \ref{order} gives rise to a form $\chi_{8,8}=
\nu(H)\chi_{10}
\in S_{8,8}(\Gamma_2)$ 
and using the Fourier expansion of $\chi_{6,8}$ we obtain the
Fourier expansion of $\chi_{8,8}$. 
Similarly, the covariant corresponding to $V[8,4]$
gives a form $\chi_{4,10}$ after multiplication with $\chi_{10}$. Finally, the covariant
defined by $V[6,6]$ is the invariant $A$ and defines the cusp form 
$\chi_{12}=\nu(A) \chi_{10}$. We refer to \cite{CFvdG} for more details.

As illustration of this we refer to the website 
\cite{website} that gives the Fourier series for  generators 
for all cases where $\dim S_{j,k}(\Gamma_2)=1$.

Another illustration of the
 efficacity of the construction of modular forms appears when one
considers the 
modules $\oplus_k M_{j,k}(\Gamma_2)$ and $\oplus_k M_{j,k}(\Gamma_2,\epsilon)$.
Let $R^{\rm ev}_2$ be the ring of scalar-valued modular forms of even weight.
The structure of the $R^{\rm ev}_2$-modules
$$
\oplus_k M_{j,2k}(\Gamma_2), \qquad \oplus_k M_{j,2k+1}(\Gamma_2)
$$
has been determined for $j=2,4,6,8,10$ by Satoh, 
Ibukiyama, Kiyuna, van Dorp and 
Takemori using various methods. 
Using covariants one can uniformly treat these cases and the cases of modular forms with character for the same values of $j$
$$
\oplus_k M_{j,2k}(\Gamma_2,\epsilon), \qquad \oplus_k M_{j,2k+1}(\Gamma_2,\epsilon)\, .
$$
For example, the $R^{\rm}_2$-module $\oplus_k M_{2,2k+1}(\Gamma_2,\epsilon)$ is free
with generators of weight $(2,9)$, $(2,11)$ and $(2,17)$ and the module $\oplus_k M_{10}^{10,2k}(\Gamma_2,\epsilon)$ is free with $10$ generators. 
We refer to \cite{CFvdG2}.

Yet another application of the construction of 
modular forms via covariants deals with
small weights. It is known by Skoruppa (\cite{Sk}) 
that $\dim S_{j,1}(\Gamma_2)=0$. He
proved this using Fourier-Jacobi forms. 
We conjecture $\dim S_{j,2}(\Gamma_2)=0$
and proved this for $j\leq 52$ using covariants.  
We refer to \cite{C-vdG2}

As a final illustration, 
for $k=3$ 
the smallest $j$ such that $\dim S_{j,3}(\Gamma_2)\neq 0$ is $36$. It is
not difficult to construct a generator of $S_{36,3}(\Gamma_2)$ using covariants, see \cite{C-vdG2}.
\end{section}
\begin{section}{Rings of Scalar-Valued Modular Forms}
The approach explained in the preceding section makes it easy
to find generators for the rings $R_2(F)=\oplus_k M_k(\Gamma_2)(F)$ of modular forms of degree $2$ for $F={\CC}$ or $F={\FF}_p$.
We write $\nu_F$ for the map $I(2,6)(F) \to R_2(F)[1/\chi_{10}]$. 
We denote by $R^{\rm ev}_2(F)$ the subring of even weight modular forms.

The degree $2$ invariant $A$ of a binary sextic
$f=\sum_{i=0}^6 a_i x_1^{6-i}x_2^i$
can be written as
$$
120\, a_0a_6-20\, a_1a_5+8\, a_2a_4 -3\, a_3^2\, .
$$
Corollary \ref{order}  implies that
$\nu_F(A)$ cannot be regular for $F={\CC}$ or ${\FF}_p$ with 
$p\geq 5$, but also that $\nu_F(AD)$
is a cusp form $\chi_{12} \in S_{12}(\Gamma_2)(F)$ of weight $12$.

In degree $4$ there is the invariant $B$ given by
$$
 (81 \, a_0a_6 +9 \, a_1a_5) a_3^2 
 -3\, (15\, a_0a_4a_5 +15\, a_1a_2a_6 
+a_1a_4^2+a_2^2a_5) a_3 +\cdots + a_2^2a_4^2 
$$
and Corollary \ref{order} implies
that it defines a regular modular form $\psi_4=\nu_F(B)$ 
of weight~$4$.

The invariant $C$ of degree $6$ is given by
$$
18\, (9\, a_0a_6 + 4\, a_1a_5)a_3^4- 6\, (33\, a_0a_4a_5+33\, a_1a_2a_6 +
4\, a_1a_4^2 +\, 4 a_2^2a_5)a_3^3 + \cdots
$$
and in a similar way one sees that $A B -3\, C$ starts with
$$
1458 \, a_0a_6 a_3^4 -486\, (a_0a_4a_5+a_1a_2a_6) a_3^3 + \cdots
$$
and defines a regular modular form $\psi_6=\nu_F(AB-3C)$ of weight $6$. 

The discriminant $D$ starts as
$$
729 \, a_0^2a_6^2\, a_3^6 -54 (9\, a_0^2a_4a_5a_6 -2\, a_0^2a_5^3+9\, a_0a_1a_2a_6^2 
- 2\, a_1^3a_6^2) a_3^5 +\cdots
$$
and is seen to have order $2$ along $\mathcal{A}_{1,1}$. It defines a cusp
form that is a non-zero multiple of $\chi_{10}$.

\begin{proposition}
For $F={\CC}$ or $F={\FF}_p$ with $p\geq 5$ the modular forms $\psi_4$, $\psi_6$,
$\chi_{10}$ and $\chi_{12}$ generate $R^{\rm ev}_2(F)$. 
\end{proposition}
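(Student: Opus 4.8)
The plan is to push the whole ring down to the locus $\mathcal{A}_{1,1}\cong\mathcal{A}_1\times\mathcal{A}_1$ of products of elliptic curves and then to induct on the weight, using as the sole tool the exact sequence $0\to M_{k-10}(\Gamma_2)(F)\to M_k(\Gamma_2)(F)\to {\rm Sym}^2 M_k(\Gamma_1)(F)$ of the paper (the two maps being multiplication by $\chi_{10}$ and restriction to $\mathcal{A}_{1,1}$, respectively). Since $F={\CC}$ or ${\FF}_p$ with $p\ge 5$ we have $M_*(\Gamma_1)(F)=F[E_4,E_6]$ with $1728\,\Delta=E_4^3-E_6^2$ and $S_{12}(\Gamma_1)(F)=F\cdot\Delta$; a short computation with power sums, using only these two facts, then shows that the subring $\bigoplus_k {\rm Sym}^2 M_k(\Gamma_1)(F)$ of $M_*(\Gamma_1)(F)\otimes M_*(\Gamma_1)(F)$ is the polynomial ring generated by $E_4^{\otimes 2}$, $E_6^{\otimes 2}$ and $\Delta^{\otimes 2}$.

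Next I would identify the restrictions of the four forms to $\mathcal{A}_{1,1}$, i.e.\ their images under the last arrow above. The form $\chi_{10}$ vanishes along $\mathcal{A}_{1,1}$, so it restricts to $0$. Since $\nu_F$ is injective and $B$, $AB-3C$ are nonzero invariants, $\psi_4=\nu_F(B)$ and $\psi_6=\nu_F(AB-3C)$ are nonzero regular forms of weights $4$ and $6$; a form of weight $<10$ lying in the kernel of the arrow would be divisible by $\chi_{10}$ and hence zero, so $\psi_4$ and $\psi_6$ have nonzero restriction, necessarily a nonzero multiple of $E_4^{\otimes 2}$ respectively $E_6^{\otimes 2}$ (both spaces being one-dimensional). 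For $\chi_{12}=\nu_F(AD)=\nu_F(A)\,\chi_{10}$: by Corollary~\ref{order} the form $\nu_F(A)$ has a pole of order exactly $2$ along $\mathcal{A}_{1,1}$ — this is where ${\rm char}\,F\ne 2,3$ enters, keeping the $-3\,a_3^2$-term alive — so $\chi_{12}$ is regular and does not vanish on $\mathcal{A}_{1,1}$; and since $\chi_{12}$ is a cusp form, $\Phi(\chi_{12})=0$, so $\chi_{12}|_{\mathcal{A}_{1,1}}$ vanishes whenever either elliptic factor goes to the cusp, which forces it to be a nonzero multiple of $\Delta^{\otimes 2}$. Consequently the restriction homomorphism $F[\psi_4,\psi_6,\chi_{12}]\to\bigoplus_k {\rm Sym}^2 M_k(\Gamma_1)(F)$ is surjective.

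Finally I run the induction on the even weight $k$. Given $\chi\in M_k(\Gamma_2)(F)$, surjectivity lets me choose $Q\in F[\psi_4,\psi_6,\chi_{12}]$ of weight $k$ with the same restriction to $\mathcal{A}_{1,1}$ as $\chi$; then $\chi-Q$ lies in the kernel of the arrow, so $\chi-Q=\chi_{10}\,\chi'$ for some $\chi'\in M_{k-10}(\Gamma_2)(F)$. If $k\le 9$ this last space is $0$ (negative weight; in particular the case $k=2$ recovers $M_2(\Gamma_2)(F)\hookrightarrow {\rm Sym}^2 M_2(\Gamma_1)(F)=0$), so $\chi=Q\in F[\psi_4,\psi_6,\chi_{12}]$; otherwise $\chi'$ has smaller weight, hence lies in $F[\psi_4,\psi_6,\chi_{10},\chi_{12}]$ by the inductive hypothesis, and then so does $\chi=Q+\chi_{10}\chi'$. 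Together with the trivial inclusion $F[\psi_4,\psi_6,\chi_{10},\chi_{12}]\subseteq R^{\rm ev}_2(F)$ this yields equality. The main obstacle is the second step, and inside it the $\chi_{12}$ case: one has to control simultaneously the order of $\nu_F(A)$ along $\mathcal{A}_{1,1}$ (Corollary~\ref{order} together with the characteristic assumption) and the cuspidality in each elliptic variable of the restriction of a genus-two cusp form, in order to be sure that $\chi_{12}$ supplies exactly the generator of ${\rm Sym}^2 M_*(\Gamma_1)(F)$ that $\psi_4$ and $\psi_6$ are missing; everything else is either quoted from the paper or a routine Segre-product / power-sum manipulation.
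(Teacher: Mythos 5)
Your argument is correct, but it is not the paper's main proof, and it is worth comparing the three routes. The paper's proof of this proposition is an asymptotic dimension count: algebraic independence of $A,B,C,D$ makes $T(F)=F[\psi_4,\psi_6,\chi_{10},\chi_{12}]$ a polynomial ring with $\dim T_k=k^3/17280+O(k^2)$, and Riemann--Roch gives $\dim M_k(\Gamma_2)(F)=c_1(L)^3k^3/3!+O(k^2)$ with $c_1(L)^3=1/2880$, whence no further generators; the subsequent Remark sketches a second, inductive proof using the exact sequence (4) together with the numerical identity $t(k)-t(k-10)=c(k)(c(k)+1)/2$ and the base cases $k\le 12$. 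Your proof is a structural version of that Remark: instead of counting dimensions you identify the restrictions to $\mathcal{A}_{1,1}$ of $\psi_4,\psi_6,\chi_{12}$ as nonzero multiples of $E_4\otimes E_4$, $E_6\otimes E_6$, $\Delta\otimes\Delta$ (the last correctly combining ${\rm ord}_{\mathcal{A}_{1,1}}\nu_F(A)=-2$ from Corollary \ref{order} with cuspidality of $\chi_{12}$ in each elliptic factor), and you then feed surjectivity of $F[\psi_4,\psi_6,\chi_{12}]\to\bigoplus_k{\rm Sym}^2M_k(\Gamma_1)(F)$ into the induction via (4). This buys generation directly, with no appeal to algebraic independence or to Riemann--Roch, and yields as a byproduct that the last arrow of (4) is surjective for all even $k$; what it costs is that the whole weight is carried by the claim you wave through as ``a short computation with power sums'', namely that $E_4^{\otimes 2},E_6^{\otimes 2},\Delta^{\otimes 2}$ generate $\bigoplus_k{\rm Sym}^2M_k(\Gamma_1)(F)$ --- this is precisely the content the paper's Remark compresses into ``one easily sees'', and it does require an argument, since the squares $m\otimes m$ of the weight-$k$ monomials must span even though the monomials themselves are linearly dependent. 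A clean way to supply it: induct on $k$, apply the constant term (Siegel operator) in one tensor factor, note that the squares of the monomials $E_4^aE_6^b$ then map onto a spanning set of $M_k(\Gamma_1)(F)$ (here $1728\in F^{*}$ is used), and observe that a symmetric tensor killed by the constant term in one, hence either, factor lies in ${\rm Sym}^2S_k(\Gamma_1)=(\Delta\otimes\Delta)\,{\rm Sym}^2M_{k-12}(\Gamma_1)$, which the inductive hypothesis handles. With that lemma made explicit, your induction on the even weight, including the base cases $k\le 8$, is complete.
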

\begin{proof} The algebraic independence of $A,B,C,D$ shows that
the generators are algebraically independent. 
Therefore $\psi_4,\psi_6,\chi_{10}, \chi_{12}$ 
generate a graded subring $T(F)\subseteq R^{\rm ev}_2(F)$ 
such that for even $k$ we have
$$
\dim T_k(F)= \frac{k^3}{17280} + O(k^2)\, .
$$
Now by Riemann-Roch we have for even $k$
$$
\dim M_k(\Gamma_2)(F)= \frac{c_1(L)^3}{3!} k^3 + O(k^2)
$$
since $c_1(L)^3=1/2880$, \cite[p.\ 72]{vdG}. 
Therefore there cannot be more generators.
Note that $4\cdot 6 \cdot 10 \cdot 12=2880$.
\end{proof}
\begin{remark}
Restriction to $\mathcal{A}_{1,1}$ 
shows that $\psi_4,\psi_6,\chi_{10},\chi_{12}$
generate $M_k(\Gamma_2)(F)$ for $k\leq 12$. Let $d(k)=\dim_F M_k(\Gamma_2)(F)$
and $t(k)=\dim_F T_k(F)$. Then $t(k)\leq d(k)$  and for even $k$ the exact
sequence (4) yields
$$
d(k)\leq d(k-10) + \frac{c(k)(c(k)+1)}{2}
$$
with $c(k)=\dim_F M_k(\Gamma_1)(F)$.
Now one easily sees $t(k)-t(k-10)= c(k)(c(k)+1)/2$. Thus 
if we assume $d(k-10)=t(k-10)$ we get 
$$
t(k) \leq d(k) \leq d(k-10)+ \frac{c(k)(c(k)+1)}{2}= t(k)
$$ 
and this provides via induction another proof that $\psi_4, \psi_6, \chi_{10}$ and $\chi_{12}$ generate $R^{\rm ev}_2(F)$ for $F={\CC}$ or ${\FF}_p$ with $p\geq 5$. 
\end{remark}

The odd degree invariant $E$ (of degree $15$) of binary sextics 
starts with
$$
-729(a_0^2a_5^3-a_1^3a_6)a_3^{10} + \ldots 
$$
and one checks that it has order $-3$ along $\mathcal{A}_{1,1}$. So
$\chi_{10}^2 \nu_F(E)$ defines a regular cusp form 
$\chi_{35} \in S_{35}(\Gamma_2)(F)$ with order $1$ along $\mathcal{A}_{1,1}$.

\smallskip

Let now ${\rm char}(F)\neq 2$.
The locus in $\mathcal{A}_2\otimes F$ of principally polarized abelian
surfaces $X$ with ${\rm Aut}(X)$ containing ${\ZZ}/2{\ZZ}\times {\ZZ}/2{\ZZ}$
consists of two irreducible divisors $H_1=\mathcal{A}_{1,1}$ and
$H_4$, the Humbert surface of degree $4$ of abelian surfaces isogenous
with a product by an isogeny of degree $4$. In terms of moduli of curves,
$H_4$ is the locus of curves that are double covers of elliptic curves.
We know that the cycle class of $H_1+H_4$ is $35 \lambda_1$ in
${\rm Pic}_{\QQ}(\mathcal{A}_2)$, see \cite[p.\ 218]{vdG1987}.

\begin{lemma} Suppose that ${\rm char}(F)\neq 2$.
A modular form $f \in M_k(\Gamma_2)(F)$ with $k$ odd vanishes on $H_1$
and $H_4$.
\end{lemma}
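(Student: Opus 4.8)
The plan is to exploit the fact that the involutions in question act by $\pm1$ on the relevant spaces and to track the sign of $\det(c\tau+d)$ under the corresponding automorphism. Concretely, a point of $H_1$ or $H_4$ has in its automorphism group a copy of $\ZZ/2\ZZ\times\ZZ/2\ZZ$; in each case there is a non-trivial automorphism $\alpha$ of the abelian surface $X$ that is \emph{not} $-1$ but that acts on $H^0(X,\Omega^1_X)\cong \EE_X$ — and hence on $L_X=\det\EE_X$ — by a scalar of determinant $-1$. For $H_1=\mathcal{A}_{1,1}$ one takes $\alpha=(-1)\times\mathrm{id}$ on $E_1\times E_2$: it acts on the Hodge bundle by $\mathrm{diag}(-1,1)$, so on $L$ by $-1$. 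For $H_4$, the curve $C$ is a double cover $C\to E$ of an elliptic curve; the deck involution $\iota_E$ together with the hyperelliptic involution $\iota_C$ generate $\ZZ/2\ZZ\times\ZZ/2\ZZ$, and $\iota_E$ (composed with $\iota_C$ as needed) again acts on $H^0(C,K_C)=H^0(E,\omega_E)\oplus(\text{anti-invariant part})$ with determinant $-1$ on the two-dimensional space.

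First I would make precise, at the level of the stack $\mathcal{A}_2\otimes F$, that such an $\alpha$ gives an automorphism of the pair $(X,\text{polarization})$, hence acts on the fibre $(\EE_\rho)_X$ for every $\rho$, in particular on $L^k_X$ by $\det(\rho_\alpha)^{\,?}$ — for the determinant representation by $(\det\alpha|_{\EE_X})^k=(-1)^k$. Since $f\in M_k(\Gamma_2)(F)$ is a \emph{function} on the stack (a section invariant under all automorphisms), its value $f(X)\in L^k_X$ must be fixed by $\alpha$; but $\alpha$ acts on $L^k_X$ by $(-1)^k=-1$ when $k$ is odd, forcing $f(X)=0$. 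As $X$ ranges over a dense set of $H_1$ (resp. $H_4$), and $f$ is a regular section, $f$ vanishes identically on $H_1$ (resp. $H_4$).

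Equivalently, in classical language: the automorphism $\alpha$ corresponds to an element $\gamma=(a,b;c,d)\in\Gamma_2$ fixing the period point $\tau\in\mathfrak{H}_2$ of $X$, with $\rho(c\tau+d)$ realizing the action of $\alpha$ on $\EE_X$; then (1) gives $f(\tau)=f(\gamma(\tau))=\det(c\tau+d)^k f(\tau)$, and $\det(c\tau+d)=\det(\alpha|_{\EE_X})=-1$, so $(1-(-1)^k)f(\tau)=0$ and $f(\tau)=0$ for $k$ odd. One should note here that the hypothesis $\mathrm{char}(F)\neq2$ is exactly what makes $1-(-1)^k=2$ invertible, so that the argument actually produces $f(\tau)=0$ rather than a vacuous identity.

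The main obstacle — really the only point requiring care — is verifying for $H_4$ that $\ZZ/2\ZZ\times\ZZ/2\ZZ\subset\mathrm{Aut}(X)$ indeed contains an element acting with determinant $-1$ on $\EE_X$ and fixing the relevant moduli point: one must check that the extra involution of a bielliptic genus-two curve (the one descending the cover $C\to E$) does not act as a scalar on $H^0(C,K_C)$ but splits it into a $+1$ and a $-1$ eigenline, so that its determinant is $-1$. This is a standard computation with the eigenspace decomposition of $H^0(C,K_C)$ under the two commuting involutions, using that $\iota_C$ acts by $-1$ and that $\iota_E\iota_C$ is the third involution; I would carry it out by writing $C\colon y^2=g(x^2)$ with $g$ of degree $3$, so that $x\mapsto -x$ is the bielliptic involution acting on $dx/y,\ x\,dx/y$ by $\mathrm{diag}(+1,-1)$, which has determinant $-1$ as required. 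With that in hand both divisors are handled uniformly by the scalar-obstruction argument above.
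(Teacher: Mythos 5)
Your proposal is correct and follows essentially the same route as the paper, whose one-line proof is precisely that every $[X]\in H_1$ or $H_4$ admits an involution acting by $-1$ on $H^0(X,\Omega^2_X)=\det\EE_X$, forcing odd-weight forms to vanish (with ${\rm char}(F)\neq 2$ making $2$ invertible); you merely spell out the automorphisms and the stack/classical bookkeeping. The only nitpick is a harmless sign swap in the bielliptic computation: for $y^2=g(x^2)$ the involution $x\mapsto -x$ sends $dx/y\mapsto -dx/y$ and fixes $x\,dx/y$, i.e.\ it acts by ${\rm diag}(-1,+1)$, which still has determinant $-1$ as needed.
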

\begin{proof}
An abelian surface $[X] \in H_1$ or $[X] \in H_4$ possesses an involution
that acts by $-1$ on $H^0(X,\Omega^2_X)$.
\end{proof}
\begin{corollary}
The form $\chi_{35}$ as a section of $L^{\otimes 35}$ 
has as divisor $H_1+H_4+D$ with $D$ the divisor at infinity.
\end{corollary}

We can now easily derive the results of Igusa and Nagaoka
(see \cite{Igusa1960,N}, and also \cite{Ichikawa}).

\begin{theorem}\label{thmRing} Let $F={\CC}$ or $F={\FF}_p$ with $p\geq 5$. Then the 
ring $R_2({\FF}_p)$ is generated over $R_2^{\rm ev}({\FF}_p)
=F[\psi_4,\psi_6,\chi_{10},\chi_{12}]$ by the cusp form $\chi_{35}$
of weight $35$ with $\chi_{35}^2\in R^{\rm ev}_2(F)$.
\end{theorem}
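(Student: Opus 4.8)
The plan is to show that the ring $R_2(F)$ is generated over its even-weight subring $R_2^{\rm ev}(F)=F[\psi_4,\psi_6,\chi_{10},\chi_{12}]$ by the single odd-weight form $\chi_{35}$. The strategy is dimension counting via Riemann--Roch, completely parallel to the proof that $\psi_4,\psi_6,\chi_{10},\chi_{12}$ generate $R_2^{\rm ev}(F)$ given above, but now carried out on the odd-weight side. First I would observe that multiplication by $\chi_{35}$ gives an injection $R_2^{\rm ev}(F)[k-35] \hookrightarrow M_k(\Gamma_2)(F)$ for odd $k$, since $\chi_{35}$ is a nonzero cusp form and $R_2(F)$ is a domain (it embeds in $I(2,6)(F)$). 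So the subring $T'(F)=R_2^{\rm ev}(F)\oplus \chi_{35}R_2^{\rm ev}(F)$ sits inside $R_2(F)$, and it suffices to show equality in each odd weight $k$, i.e. that $\dim_F M_k(\Gamma_2)(F) = \dim_F R_2^{\rm ev}(F)[k-35]$ for odd $k$.

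For the upper bound on the odd-weight spaces I would use the Corollary just proved: any $f\in M_k(\Gamma_2)(F)$ with $k$ odd vanishes on $H_1+H_4$, whose class is $35\lambda_1$, and also (being forced to vanish on $H_1=\mathcal{A}_{1,1}$) restricts to the boundary as a cusp form. Concretely, $\chi_{35}$ has divisor exactly $H_1+H_4+D$, so division by $\chi_{35}$ sends $M_k(\Gamma_2)(F)$ into the space of meromorphic weight-$(k-35)$ forms regular away from $H_1+H_4+D$; the content is that such a quotient is actually holomorphic, i.e. $f/\chi_{35}\in M_{k-35}(\Gamma_2)(F)=R_2^{\rm ev}(F)[k-35]$. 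This gives $\dim_F M_k(\Gamma_2)(F)\le \dim_F R_2^{\rm ev}(F)[k-35]$, and the reverse inequality is the injection above, so equality holds. Just as in the even-weight remark, one can instead run the bound inductively: for odd $k$, restriction to $\mathcal{A}_{1,1}$ gives an exact sequence $0\to M_{k-10}(\Gamma_2)(F)\to M_k(\Gamma_2)(F)\to \bigoplus_i M_{?}(\Gamma_1)(F)\otimes M_{?}(\Gamma_1)(F)$ whose target vanishes for small $k$ (odd scalar weight $<11$ on $\Gamma_1$ gives nothing), pinning down the low odd weights, and then Riemann--Roch with $c_1(L)^3=1/2880$ forces the leading term $k^3/17280$ on both sides, so no further odd generators can appear.

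The main obstacle is the holomorphy (equivalently, the injectivity/surjectivity bookkeeping) over $F=\FF_p$: the Riemann--Roch argument for the $O(k^3)$ main term works uniformly, but showing that $f/\chi_{35}$ is genuinely regular in characteristic $p$ requires the divisor computation $\mathrm{div}(\chi_{35})=H_1+H_4+D$ to hold as an equality of effective divisors on $\tilde{\mathcal{A}}_2\otimes \FF_p$, which rests on the cycle relation $35\lambda_1=[H_1]+[H_4]$ in $\mathrm{Pic}_\QQ$ being valid integrally enough after reduction mod $p$ (this is why $p\ge 5$ is needed — the same restriction that appears in Corollary \ref{order} and Proposition \ref{prop68}). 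Once that is in hand, everything else is the routine dimension count. I would also remark that $\chi_{35}^2\in R_2^{\rm ev}(F)$ follows because $\chi_{35}^2$ is an even-weight form with divisor $2H_1+2H_4+2D=70\lambda_1$-worth, expressible as an explicit polynomial in $\psi_4,\psi_6,\chi_{10},\chi_{12}$ (equivalently, $E^2$ is a polynomial in $A,B,C,D$ in the invariant ring $I(2,6)(F)$), which completes the description of the ring as $R_2^{\rm ev}(F)[\chi_{35}]/(\chi_{35}^2-P(\psi_4,\psi_6,\chi_{10},\chi_{12}))$.
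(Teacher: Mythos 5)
Your core argument --- every odd-weight form vanishes on $H_1$ and $H_4$ (and at the boundary), hence is divisible by $\chi_{35}$ with holomorphic quotient of even weight $k-35$ lying in $R_2^{\rm ev}(F)$ --- is exactly the paper's proof, which consists of this single observation combined with the preceding Lemma and the Corollary that $\chi_{35}$ has divisor $H_1+H_4+D$. The extra dimension-counting and Riemann--Roch scaffolding is superfluous (once $f/\chi_{35}$ is holomorphic you are done, and regularity across $D$ can also be obtained directly from the Koecher principle), so the proposal is correct and follows essentially the same route.
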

\begin{proof}
Any odd weight modular form vanishes on $H_1$ and $H_4$, hence is
divisible by $\chi_{35}$. 
\end{proof}
\begin{remark}
The same argument proves Theorem \ref{thmRing} for any commutative
ring $F$ in which $6$ is invertible. It can also be used to obtain
Igusa's result on the ring $R_2({\ZZ})$.
\end{remark}
\bigskip

Now positive characteristic sometimes allows more modular forms than 
characteristic zero. We know that the locus in $\mathcal{A}_g \otimes {\FF}_p$
of abelian varieties of $p$-rank $<g$ has cycle class $(p-1)\lambda_1$, \cite{vdG,E-vdG}. 
This implies that there is a non-zero modular form of weight $p-1$ in characteristic $p$.
This modular form is called the Hasse invariant of degree $g$ and weight $p-1$.
The image of the Hasse invariant of degree $g$ under the Siegel operator is the
Hasse invariant of degree $g-1$. 

The Hasse invariants for degree $1$ and characteristic $2$ and $3$ appear as the
generators  $a_1$ and $b_2$ in 
$$
R_1({\FF}_2)={\FF}_2[a_1,\Delta], \quad
R_1({\FF}_3)={\FF}_2[b_2,\Delta] \, .
$$
\bigskip

The degree $2$ invariant $A$  of binary sextics
reduces to $a_1a_5-a_2a_4$ modulo $3$ and in view of Conclusion 
\ref{order}
defines a form $\nu_{\FF_3}(A)\in M_2(\Gamma_2)({\FF}_3)$ and
it must agree with the
Hasse invariant (up to a non-zero multiplicative scalar) as there is
only one invariant of degree $2$ (up to multiplicative scalars).
A careful analysis of the invariants in characteristic $3$ 
leads to the description of the ring $R_2({\FF}_3)$ 
given in \cite{vdG2020}. 

\begin{theorem}
The subring ${\mathcal R}_2^{\rm ev}({\FF}_3)$ of modular forms of even weight
 is generated by forms of weights $2,10,12,14$ and $36$ and has the form
$$
{\mathcal R}_2^{\rm ev}({\FF}_3)= {\FF}_3[\psi_2, \chi_{10}, \psi_{12}, \chi_{14},\chi_{36}]/J
$$
with $J$ the ideal generated by the relation
$ \psi_2^3\chi_{36}-\chi_{10}^3\psi_{12}-\psi_2^2\chi_{10}\chi_{14}^2+\chi_{14}^3$.
Moreover, ${\mathcal R}_2({\FF}_3)$ is generated over ${\mathcal R}_2^{\rm ev}$
by a form $\chi_{35}$ of weight $35$ whose square lies in $R^{\rm ev}_2({\FF}_3)$.
The ideal of cusp forms is generated by $\chi_{10}, \chi_{14},\chi_{35}, \chi_{36}$.
\end{theorem}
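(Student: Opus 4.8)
The plan is to run the same invariant-theoretic machine developed in this section, but now over $\FF_3$, where characteristic $3$ specializes the invariants $A,B,C,D,E$ in interesting ways and produces new low-weight forms. First I would catalogue what happens to the classical generators mod $3$: as noted, $A$ reduces to $a_1a_5-a_2a_4$ and, by Corollary \ref{order}, $\nu_{\FF_3}(A)$ is a \emph{regular} modular form of weight $2$ (not just meromorphic, since the $a_3^2$ term that caused trouble in characteristic $\neq 3$ has dropped out), giving $\psi_2$, which must be the degree-$2$ Hasse invariant up to scalar. Next I would examine $B$, $C$ and the combinations $AB-3C$, $AD$, etc.\ modulo $3$: the terms with coefficients divisible by $3$ vanish, so some previously-regular forms degenerate and new relations among the surviving ones appear. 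The discriminant $D$ still has order $2$ along $\mathcal{A}_{1,1}$ and gives $\chi_{10}$; one checks that $\nu_{\FF_3}$ applied to suitable degree-$6$ combinations still yields something in weight $12$ (call it $\psi_{12}$), and I would search among degree-$7$ invariants (combined with $D$) for the new weight-$14$ cusp form $\chi_{14}$ and among higher-degree invariants for $\chi_{36}$. Each such form is built as $\chi_{10}^m\,\nu_{\FF_3}(P)$ for an explicit invariant $P$, and regularity is checked via the order estimates of Corollary \ref{order} together with vanishing of low-weight elliptic cusp forms as in Proposition \ref{prop68}.

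Once the five even-weight forms $\psi_2,\chi_{10},\psi_{12},\chi_{14},\chi_{36}$ are in hand, I would establish the single relation
$$
\psi_2^3\chi_{36}-\chi_{10}^3\psi_{12}-\psi_2^2\chi_{10}\chi_{14}^2+\chi_{14}^3=0
$$
by pulling everything back via \eqref{3} into the ring of invariants $I(2,6)(\FF_3)$: there the identity becomes a polynomial identity among explicit polynomials in $a_0,\dots,a_6$ (after clearing the powers of $D=\nu_{\FF_3}^{-1}(\chi_{10})$), which can be verified directly, and since $\nu_{\FF_3}$ is injective on $R_2(\FF_3)$ this forces the relation upstairs. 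For the surjectivity — that these forms generate all of $\mathcal{R}_2^{\rm ev}(\FF_3)$ — I would use the dimension/Hilbert-series bookkeeping exactly as in the Remark following the previous Proposition: compute the Hilbert series of $\FF_3[\psi_2,\chi_{10},\psi_{12},\chi_{14},\chi_{36}]/J$, compare its leading term with the Riemann–Roch estimate $\dim M_k(\Gamma_2)(\FF_3)=c_1(L)^3k^3/6+O(k^2)$ with $c_1(L)^3=1/2880$, and run the induction via the exact sequence
$$
0\to M_{k-10}(\Gamma_2)(\FF_3)\to M_k(\Gamma_2)(\FF_3)\to \Sym^2(M_k(\Gamma_1)(\FF_3))
$$
restricting to $\mathcal{A}_{1,1}$, now using $R_1(\FF_3)=\FF_3[b_2,\Delta]$ on the right; the base cases $k\le 12$ are handled by direct restriction to the boundary.

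For the odd-weight part, the form $\chi_{35}=\chi_{10}^2\,\nu_{\FF_3}(E)$ is constructed exactly as before (the order computation for $E$ along $\mathcal{A}_{1,1}$ is characteristic-free away from $p=2$), and the Lemma on vanishing of odd-weight forms on $H_1$ and $H_4$ together with the cycle relation $[H_1]+[H_4]=35\lambda_1$ shows every odd-weight form is divisible by $\chi_{35}$, giving $\mathcal{R}_2(\FF_3)=\mathcal{R}_2^{\rm ev}(\FF_3)\oplus\chi_{35}\,\mathcal{R}_2^{\rm ev}(\FF_3)$ with $\chi_{35}^2\in\mathcal{R}_2^{\rm ev}(\FF_3)$. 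Finally the statement about the cusp ideal: each of $\chi_{10},\chi_{14},\chi_{35},\chi_{36}$ is a cusp form (it vanishes on $D$, equivalently its image under the Siegel operator $\Phi$ is zero — one reads this off the Fourier/boundary behaviour or from divisibility by $D$ downstairs), whereas $\psi_2$ and $\psi_{12}$ are not, and a short argument with $\Phi: R_2(\FF_3)\to R_1(\FF_3)=\FF_3[b_2,\Delta]$ — noting $\Phi(\psi_2)=b_2$, $\Phi(\psi_{12})=$ (a unit times) $b_2^6$ or $\Delta$ — shows the kernel is exactly the ideal $(\chi_{10},\chi_{14},\chi_{35},\chi_{36})$.

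\textbf{Main obstacle.} The genuinely hard part is \emph{identifying the right invariants} $P$ with $\nu_{\FF_3}(P)$ (or $\chi_{10}^m\nu_{\FF_3}(P)$) equal to $\psi_{12},\chi_{14},\chi_{36}$ and then \emph{pinning down the exact relation $J$}: this requires a careful analysis of the ring of invariants $I(2,6)(\FF_3)$, which is not simply the mod-$3$ reduction of the characteristic-zero ring — new invariants and new syzygies appear in characteristic $3$ — and verifying the precise numerical coefficients in the relation is a nontrivial explicit computation. Once the correct generators and relation are found, the surjectivity argument is forced by the Hilbert-series/Riemann–Roch comparison and is essentially bookkeeping; but guessing-and-checking the presentation is where the real work lies, and it is exactly this analysis that is carried out in \cite{vdG2020}.
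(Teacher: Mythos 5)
Your overall strategy --- pull modular forms back to invariants via $\nu_{\FF_3}$, find the new regular low-weight forms from invariants whose obstructing coefficients vanish mod $3$, verify the relation as a polynomial identity in $I(2,6)(\FF_3)$, prove generation by the restriction sequence to $\mathcal{A}_{1,1}$ together with dimension bookkeeping, and handle odd weights via $\chi_{35}$ and the class $35\lambda_1$ of $H_1+H_4$ --- is exactly the route the survey sketches and that is carried out in \cite{vdG2020}; the paper itself gives no more than this sketch plus the citation. However, two of your steps quote statements that the paper proves only for ${\rm char}(F)\neq 2,3$, and they do not transfer verbatim. Corollary \ref{order} (via Proposition \ref{prop68}) is the main one: the lemma giving ${\rm ord}_{\mathcal{A}_{1,1}}(\eta_3)=1$ is proved by producing a nonzero element of $M_2(\Gamma_2)(F)$ and invoking $\dim M_2(\Gamma_2)(F)=0$, which is \emph{false} over $\FF_3$ --- the Hasse invariant $\psi_2$, the very first generator you want, lives in that space. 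So the order data for the $\alpha_i$ in characteristic $3$ (in particular the equality at $\alpha_3$, hence the decision of which invariants give regular forms and with what vanishing orders) cannot simply be cited; re-establishing it over $\FF_3$, e.g.\ by controlling an integral structure and checking that the relevant first Taylor coefficient of $\chi_{6,8}$ along $\mathcal{A}_{1,1}$ is nonzero mod $3$, is a substantive part of the ``careful analysis'' of \cite{vdG2020}, not a free input.

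Second, your parenthetical claim that the order computation for $E$ along $\mathcal{A}_{1,1}$ is ``characteristic-free away from $p=2$'' is incorrect: the extremal term of $E$ displayed in the paper has coefficient $-729=-3^6\equiv 0\pmod 3$, so precisely in characteristic $3$ the term responsible for order $-3$ disappears and the order of $\nu_{\FF_3}(E)$, hence the existence of a weight-$35$ form with divisor $H_1+H_4+D$ over $\FF_3$, requires a separate verification. Two smaller points: a scalar form of weight $k$ corresponds to an invariant of degree $k$, so $\chi_{14}$ must come from a degree-$14$ element, not from ``degree-$7$ invariants combined with $D$''; and since $J\neq 0$ the leading-term Hilbert-series comparison cannot by itself prove generation --- the induction via the restriction sequence (with $R_1(\FF_3)=\FF_3[b_2,\Delta]$ on the right, and with $\Phi(\psi_{12})$ shown to be independent of powers of $b_2$ for the cusp-ideal claim) is the right tool, but the exact bookkeeping against the candidate ring with its relation is again exactly the computation done in \cite{vdG2020}. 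In short, the skeleton matches the paper's approach, but the characteristic-$3$-specific inputs you treat as available (the orders of the $\alpha_i$, the order of $E$, the identification of the weight $12$, $14$, $36$ generators and of the relation) are the actual content of the proof and are not supplied by your argument.
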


The case of characteristic $2$ was treated in joint work with Cl\'ery 
in \cite{C-vdG2}.
In the case of characteristic $2$ a curve of genus $2$ is not described by a binary
sextic. Instead we find an equation
$$
y^2+a\, y+ b=0
$$
with $a$ (resp. $b$) in $k[x]$ of degree $\leq 3$ (resp.\ $\leq 6$) and 
the hyperelliptic involution is $y\mapsto y+a$. It comes with a basis $xdx/a, dx/a$
of regular differentials. In this case we look at pairs $(a,b) \in 
V_{3,-1}\times V_{6,-2}$ with $V_{n,m}={\rm Sym}^n(V) \otimes \det(V)^m$.
Let $\mathcal{V}^0 \subset V_{3,-1}\times V_{6,-2}$ 
be the open subset defining smooth hyperelliptic curves.
Now we have an action of ${\rm GL}_2$ and an action of ${\rm Sym}^3(V)$ via
$$
(a,b) \mapsto (a, b+v^2+va)
$$
Together this defines a stack quotient
$$
[\mathcal{V}^0/{\rm GL}_2 \ltimes V_{3,-1}]
$$
Now by an invariant we mean a polynomial in the coefficients $a_0,\ldots,a_3$
and $b_0,\ldots,b_6$ that is invariant under ${\rm SL}(V) \ltimes {\rm Sym}^3(V)$.
Let $\mathcal{K}$ be the ring of invariants. A first example is the square root of the discriminant
of $a$:
$$
K_1=a_0a_3+a_1a_2 \, .
$$
As an analogue of (4) we now get homomorphisms
$$
R_2({\FF}_2) \hookrightarrow \mathcal{K} \langepijl{\nu} R_2({\FF}_2)[1/\chi_{10}]
$$
the composition of which is the identity.

In order to construct characteristic $2$ invariants one can 
still uses binary sextics as Igusa suggested in \cite{Igusa1960}.
Indeed, one lifts the curve given by $y^2+ay+b=0$ to the Witt ring, say defined by
$y^2+\tilde{a}y+\tilde{b}=0$ and takes an invariant of the binary sextic given by
$\tilde{a}^2+4\tilde{b}$, then divides these by the appropriate power of $2$ 
and reduces modulo $2$. 

For example, the degree $2$ invariant of binary sextics yields in this way an invariant
$K_2$ that equals $K_1^2$. A degree $4$ invariant yields an invariant $K_4$ that turns out
to be divisible by $K_1$. We thus find an invariant $K_3$ of degree $3$. 

The Hasse invariant $\psi_1$ must map to $K_1$. As in characteristic $3$ a careful 
analysis gives the orders of $a_i$ and $b_i$ along $\mathcal{A}_{1,1}$ and we
can deduce for an invariant $K$ the order of $\nu(K)$ 
along $\mathcal{A}_{1,1}$.  
The ring $R_2({\FF}_2)$ was described in \cite{C-vdG2}.

\begin{theorem}
The ring $\mathcal{R}_2({\FF}_2)$ is generated by modular forms of weights
$1$, $10$, $12$, $13$, $48$
satisfying one relation of weight $52$:
$$
\mathcal{R}_2({\FF}_2) = {\FF}_2[\psi_1,\chi_{10},\psi_{12},\chi_{13}, \chi_{48}]/(R)
$$
with $R=\chi_{13}^4 + \psi_1^3\chi_{10}\chi_{13}^3+
\psi_1^4 \chi_{48} +
\chi_{10}^4 \psi_{12} $.
The ideal of cusp forms is generated by $\chi_{10},\chi_{13}$ and $\chi_{48}$.
\end{theorem}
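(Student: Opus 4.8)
The plan is to carry out, in characteristic $2$, the same three–step strategy that produced Igusa's and Nagaoka's theorems above: realize each generator as the image under $\nu$ of an invariant in $\mathcal{K}$ (suitably cleared of $\chi_{10}$–poles), verify the single relation $R$, and then use a dimension count to show that nothing more can occur. Throughout I would work with the homomorphisms $\mathcal{R}_2({\FF}_2) \hookrightarrow \mathcal{K} \langepijl{\nu} \mathcal{R}_2({\FF}_2)[1/\chi_{10}]$ whose composition is the identity, together with the normalization $\nu(\mathrm{disc}) = \chi_{10}$, where $\mathrm{disc}$ is the invariant of weight $10$ coming from the Witt lift of the discriminant of binary sextics.

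First I would determine, exactly as in the characteristic $3$ case, the orders $\mathrm{ord}_{\mathcal{A}_{1,1}}(a_i)$ and $\mathrm{ord}_{\mathcal{A}_{1,1}}(b_i)$ of the universal coefficients of the pair $(a,b) \in V_{3,-1}\times V_{6,-2}$ along the locus $\mathcal{A}_{1,1}$ of products of elliptic curves; from these one reads off $\mathrm{ord}_{\mathcal{A}_{1,1}}(\nu(K))$ for every invariant $K$, hence the precise power of $\chi_{10}$ by which $\nu(K)$ must be multiplied to become a holomorphic Siegel modular form over ${\FF}_2$. Combining this with the native invariant $K_1 = a_0a_3 + a_1a_2$ and with the invariants produced by the Witt–lifting procedure applied to the classical generators $A,B,C,D,E$ of $I(2,6)$ (and to their products), I would exhibit the five forms: $\psi_1 = \nu(K_1)$ of weight $1$, which must be the degree–$2$ Hasse invariant because the locus of $p$–rank $\leq 1$ in $\mathcal{A}_2 \otimes {\FF}_2$ has class $(p-1)\lambda_1 = \lambda_1$; the cusp form $\chi_{10}$; a regular form $\psi_{12}$ of weight $12$; and cusp forms $\chi_{13}$ and $\chi_{48}$, each of the shape $\nu(K)\,\chi_{10}^{m}$ for a suitable invariant $K$ and $m \geq 0$.

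Next I would verify the relation $R = \chi_{13}^4 + \psi_1^3\chi_{10}\chi_{13}^3 + \psi_1^4\chi_{48} + \chi_{10}^4\psi_{12} = 0$ by a direct computation: the corresponding syzygy among the chosen invariants holds in $\mathcal{K}$ after inverting $\mathrm{disc}$, or equivalently one checks $R=0$ on enough Fourier coefficients. I would then establish that $\psi_1,\chi_{10},\psi_{12},\chi_{13}$ are algebraically independent over ${\FF}_2$ — this reduces to the algebraic independence of the underlying invariants, checked by a Jacobian computation — and recall that $\mathcal{A}_2 \otimes {\FF}_2$ is irreducible, so that $\mathcal{R}_2({\FF}_2) = \bigoplus_k H^0(\tilde{\mathcal{A}}_2 \otimes {\FF}_2, L^{\otimes k})$ is a graded domain of Krull dimension $4$. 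Since $R$ is linear in the weight–$48$ variable with leading coefficient $\psi_1^4$ and constant term $\chi_{13}^4 + \psi_1^3\chi_{10}\chi_{13}^3 + \chi_{10}^4\psi_{12}$, which is not divisible by $\psi_1$, the polynomial $R$ is irreducible; hence $B := {\FF}_2[X_1,X_{10},X_{12},X_{13},X_{48}]/(R)$ is a graded domain of Krull dimension $4$ as well, with Hilbert series $(1-t^{52})/\bigl((1-t)(1-t^{10})(1-t^{12})(1-t^{13})(1-t^{48})\bigr)$. The natural map $B \to \mathcal{R}_2({\FF}_2)$ is well defined, and once we know the five forms generate it is automatically an isomorphism, because a surjection of finitely generated graded domains over a field of the same Krull dimension has trivial kernel.

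So everything reduces to the generation statement, which I expect to be the main obstacle. Here characteristic $2$ is genuinely harder than the case $p \geq 5$: the argument that an odd–weight form must vanish on $H_1 \cup H_4$ — an abelian surface there carries an involution acting by $-1$ on $H^0(X,\Omega^2_X)$ — collapses when $-1 = 1$, so one cannot split off a single generator of weight $35$ and the ring is a genuine hypersurface rather than a free module over its even part; instead the odd–weight behaviour is governed by the Hasse invariant $\psi_1$ and the $p$–rank stratification. To prove generation I would argue by induction on the weight, using the exact sequence obtained by restriction to $\mathcal{A}_{1,1}$ (the characteristic–$2$ analogue of the sequence used in the proof of Proposition~\ref{prop68}, valid now for all weights, with multiplication by $\chi_{10}$ on the subobject and the normal–bundle Taylor terms handled as there), together with $M_k(\Gamma_1)({\FF}_2) = 0$ for small $k$ and $R_1({\FF}_2) = {\FF}_2[a_1,\Delta]$. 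Concretely: this recursion computes $\dim_{{\FF}_2} M_k(\Gamma_2)({\FF}_2)$ for all $k$ from a few base cases treated by hand with the explicit invariants; Riemann–Roch pins down the leading term $\dim M_k \sim c_1(L)^3 k^3/6$ as in the case $p\geq 5$ above; and one checks that the resulting Hilbert function coincides with that of $B$, which forces the five forms to generate $\mathcal{R}_2({\FF}_2)$. Finally, the ideal of cusp forms is the kernel of the Siegel operator $\Phi$; since $\chi_{10},\chi_{13},\chi_{48}$ are cuspidal while $\Phi(\psi_1)$ and $\Phi(\psi_{12})$ generate $R_1({\FF}_2)$, reducing $R$ modulo $(\chi_{10},\chi_{13},\chi_{48})$ gives $\mathcal{R}_2({\FF}_2)/(\chi_{10},\chi_{13},\chi_{48}) \cong {\FF}_2[\psi_1,\psi_{12}]$, which maps isomorphically onto $R_1({\FF}_2)$, so the ideal of cusp forms is exactly $(\chi_{10},\chi_{13},\chi_{48})$.
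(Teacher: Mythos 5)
Your overall framework is the one the paper sketches for this theorem (the survey itself gives no proof, deferring to the cited characteristic-two paper): the model $(a,b)\in V_{3,-1}\times V_{6,-2}$, the maps $R_2({\FF}_2)\hookrightarrow\mathcal{K}\to R_2({\FF}_2)[1/\chi_{10}]$, Witt lifts of $A,\dots,E$, the identification of $\nu(K_1)$ with the Hasse invariant, the verification of the relation $R$, and the reduction of the isomorphism statement to generation via irreducibility of $R$ and the Krull-dimension argument are all sound in outline (modulo the explicit computations you defer, and modulo checking that $\Phi(\psi_{12})$ genuinely involves $\Delta$ in your cusp-ideal argument).

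The genuine gap is in the step you yourself flag as the crux: generation. First, your induction uses a restriction-to-$\mathcal{A}_{1,1}$ sequence ``valid now for all weights'' with kernel $\chi_{10}\,M_{k-10}$. In the paper that sequence is invoked only for even $k$ and ${\rm char}(F)\neq 2,3$, and the reason is not cosmetic: the proof that a form vanishing on $\mathcal{A}_{1,1}$ is divisible by $\chi_{10}$ (order $2$ along $\mathcal{A}_{1,1}$) rests on the extra involution of a product $X_1\times X_2$ acting by $-1$ on the normal direction, which kills odd Taylor terms; in characteristic $2$ this argument is vacuous, and odd-weight forms (such as $\chi_{35}$ in characteristic $0$) already show the kernel of restriction can be strictly larger than $\chi_{10}M_{k-10}$, so the inequality $d(k)\le d(k-10)+\binom{c(k)+1}{2}$ is unjustified here. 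Second, even granting that inequality, your claim that the recursion reproduces the Hilbert function of $B$ is false numerically: with $c(k)=\dim M_k(\Gamma_1)({\FF}_2)=\lfloor k/12\rfloor+1$, at $k=12$ the bound gives $t(2)+\binom{3}{2}=4$ while $t(12)=3$ (the monomials $\psi_1^{12},\psi_1^2\chi_{10},\psi_{12}$), and at $k=24$ it gives $t(14)+\binom{4}{2}=10$ while $t(24)=8$. So the restriction map is not surjective onto the swap-invariants and the dimension count cannot close; to make any such argument work you would have to control the image of restriction explicitly, which is the real content. This is exactly where the actual proof goes a different way: one determines the invariant ring $\mathcal{K}$ explicitly, computes the orders of the $a_i$ and $b_i$ (hence of every $\nu(K)$) along $\mathcal{A}_{1,1}$, and characterizes $R_2({\FF}_2)$ inside $\mathcal{K}[1/\nu^{-1}(\chi_{10})]$ by these order conditions, so that generation (and the list of weights $1,10,12,13,48$ with the single relation in weight $52$) comes out of an explicit invariant-theoretic analysis rather than a Hilbert-function induction.
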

\end{section}
\begin{section}{Moduli of Curves of Genus Three and Invariant Theory of 
Ternary Quartics}

Now we turn to genus $3$ treated in \cite{CFvdG3} 
and consider the moduli space $\mathcal{M}_3^{\rm nh}$
of non-hyperelliptic curves of genus $3$ over a field $F$. This is an open part
of the moduli space $\mathcal{M}_3$ with as complement the divisor $\mathcal{H}_3$
of hyperelliptic curves.
Let now $V=\langle x_0,x_1,x_2\rangle$ be the $3$-dimensional $F$-vector space with basis
$x_0,x_1,x_2$. We let $V_{4,0,-1}$ be the irreducible representation ${\rm Sym}^4(V) \otimes
\det(V)^{-1}$. The underlying space is the space of ternary quartics. It contains the open
subset $V_{4,0,-1}^0$ of ternary quartics with non-vanishing discriminant; that is, the
ternary quartics that define smooth plane quartic curves.

It is known that $\mathcal{M}^{\rm nh}_3$ has a description
as stack quotient
$$
\mathcal{M}_3^{\rm nh} \langepijl{\sim} [V_{4,0,-1}^0/{\rm GL}_3]
$$
Indeed, if $C$ is a non-hyperelliptic curve of genus $3$ then a choice of basis of
$H^0(C,K)$ defines an embedding of $C$ into ${\PP}^2$ and the image satisfies an
equation $f(x_0,x_1,x_2)=0$ with $f$ homogeneous of degree $4$.
In order that the action on the space of differentials
with basis
$$
x_i \, (x_0dx_1-x_1dx_0)/(\partial f/\partial x_2), \qquad i=0,1,2
$$
is the standard representation $V$ we need to twist ${\rm Sym}^4(V)$ by $\det(V)^{-1}$.
Then $\lambda {\rm Id} \in {\rm GL}_3(F)$ acts by $\lambda$ on $V_{4,0,-1}$ and we
arrive at the familiar stack quotient $[Q/{\rm PGL}_3]$ with $Q$ the space of 
smooth projective curves of degree $4$ in ${\PP}^2$ by first dividing by the
multiplicative group of multiples of the diagonal.

\begin{conclusion}\label{conclusion-deg3}
The pull back of the Hodge bundle ${\EE}$ on $\mathcal{M}_3^{\rm nh}$ under
$$
V_{4,0,-1}^0 \to [V_{4,0,-1}^0]/{\rm GL}_2] \langepijl{\sim} \mathcal{M}_3^{\rm nh}
$$
is the equivariant bundle $V$.
\end{conclusion}

Therefore we now look at the invariant theory of ${\rm GL}_3$ 
acting on ternary
quartics ${\rm Sym}^4(V)$ with $V=\langle x,y,z\rangle$ 
the standard representation of
${\rm GL}_3(V)$. We write the universal ternary quartic $f$ as
$$
f= a_0 x^4+ a_1 x^3y+ \cdots + a_{14} z^4
$$
in a lexicographic way. We fix coordinates for $\wedge^2V$
$$
\hat{x}=y\wedge z, \, \hat{y}= z \wedge x, \, 
\hat{z}=x \wedge y\, .
$$ 
Recall that an irreducible representation $\rho$ of ${\rm GL}_3$ 
is determined
by its highest weight $(\rho_1 \geq \rho_2 \geq \rho_3)$. This
representation appears in
$$
{\rm Sym}^{\rho_1-\rho_2}(V)\otimes
{\rm Sym}^{\rho_2-\rho_3}(\wedge^2 V) \otimes \det(V)^3
$$
An invariant for the action of ${\rm GL}_3$ on ${\rm Sym}^4(V)$ is
a polynomial in $a_0,\ldots,a_{14}$ invariant under ${\rm SL}_3$.
Instead of the notion of covariant we consider here the notion of 
a concomitant. A concomitant is a polynomial in $a_0,\ldots,a_{14}$
and in $x,y,z$ and $\hat{x},\hat{y},\hat{z}$ that is invariant under
the action of ${\rm SL}_3$. The most basic example is the universal
ternary quartic $f$. 

Concomitants can be obtained as follows. One takes an equivariant map
of ${\rm GL}_3$-representations
$$
U \hookrightarrow {\rm Sym}^d({\rm Sym}^4(V))
$$
or equivalently the equivariant embedding 
$$
\varphi: {\CC} \langepijl{} {\rm Sym}^d({\rm Sym}^4(V)) \otimes U^{\vee}
$$
Then $\Phi=\varphi(1)$ is a concomitant. If $U$ is an irreducible representation of highest weight $\rho_1 \geq \rho_2 \geq \rho_3$ then $\Phi$ is 
of degree $d$ in $a_0,\ldots,a_{14}$, of degree $\rho_1-\rho_2$ in 
$x,y,z$ and degree $\rho_2-\rho_3$ in $\hat{x},\hat{y},\hat{z}$.

The invariants form a ring $I(3,4)$ and the concomitants 
$\mathcal{C}(3,4)$ form a module over $I(3,4)$. For more on the ring
$I(3,4)$
see \cite{Dixmier}.
\end{section}
\begin{section}{Concomitants of ternary quartics and modular forms of degree $3$}
The starting point for the construction of modular forms 
of degree $3$ is the Torelli morphism
$$
t: \mathcal{M}_3 \to \mathcal{A}_3
$$
defined by associating to a curve of genus $3$ its Jacobian. 
This is a morphism of Deligne-Mumford stacks of 
degree $2$ ramified along the hyperelliptic
locus $\mathcal{H}_3$. Indeed, every abelian variety has 
an automorphism of order $2$, 
but a generic curve of genus $3$ does not have
non-trivial automorphisms. Hyperelliptic curves have an automorphism
of order $2$ that induces $-1_{\rm Jac}$ on the Jacobian. 

There is a Siegel modular form $\chi_{18} \in S_{18}(\Gamma_3)$
constructed by Igusa \cite{Igusa1967}.
It is defined as the product of the $36$ even theta constants of order $2$.
The divisor of $\chi_{18}$ in the standard compactification
(defined by the second Voronoi fan) 
$\tilde{\mathcal{A}}_3$ is
$$
\mathcal{H}_3 +2D
$$
with $D$ the divisor at infinity. 

The pullback under the Torelli morphism
of the Hodge bundle ${\EE}$ on $\mathcal{A}_3$ 
is the Hodge bundle of $\mathcal{M}_3$. The Hodge bundle on $\mathcal{M}_3$
extends to the Hodge bundle over $\overline{\mathcal{M}}_3$, 
denoted again by ${\EE}$. For each irreducible representation $\rho$ of
${\rm GL}_3$ 
have a bundle ${\EE}_{\rho}$ on $\overline{\mathcal{M}}_3$ constructed by
applying a Schur functor.
We thus can consider
$$
T_{\rho}=H^0(\overline{\mathcal{M}}_3,{\EE}_{\rho})
$$
and elements of it are called Teichm\"uller modular forms of weight $\rho$
and genus (or degree) $3$. There is an involution $\iota$ acting on the
stack ${\mathcal{M}}_3$ associated to the double cover
$\mathcal{M}_3 \to \mathcal{A}_3$. If the characteristic is not $2$
we can thus split $T_{\rho}$ into $\pm 1$-eigenspaces under $\iota$
$$
T_{\rho}=T_{\rho}^{+} \oplus T_{\rho}^{-}\, .
$$
We can identify the invariants under $\iota$ with Siegel modular forms
$$
T_{\rho}^{+}= M_{\rho}(\Gamma_3)  \eqno(5)
$$
while the space $T_{\rho}^{-}$ consists of the genuine Teichm\"uller 
modular forms. 

The pullback of $\chi_{18}$ to $\mathcal{M}_3$ is a square
$\chi_9^2$ with $\chi_9$ a Teichm\"uller modular form of weight $9$
constructed by Ichikawa \cite{Ichikawa1,Ichikawa2}. 

Using the identification (5) We have
$$
\chi_9 \, T_{\rho}^{-} \subset  S_{\rho'}(\Gamma_3) 
\quad \text{\rm with $\rho'=\rho\otimes {\det}^9$.}
$$

We will now use invariant theory of ternary quartics 
Conclusion \ref{conclusion-deg3} implies that the pullback of a scalar-valued
Teichm\"uller modular form of weight $k$ is an invariant of weight $3k$
in $I(3,4)$. An invariant of degree $3d$ defines a 
meromorphic Teichm\"uller modular form of weight $d$ on
$\overline{\mathcal{M}}_3$ that becomes holomorphic after multiplication
by an appropriate power of $\chi_9$. Indeed, an invariant of degree
$3d$ is defined by an equivariant embedding $\det(V)^{4d} 
 \hookrightarrow
{\rm Sym}^{3d}({\rm Sym}^4(V))$ or taking care of the necessary 
twisting by
$$
\det(V)^{d}  \hookrightarrow
{\rm Sym}^{3d}({\rm Sym}^4(V)) \otimes \det(V)^{-3d} \, .
$$
We thus get
$$
T \langepijl{} I(3,4) \langepijl{} T[1/\chi_9]\, ,
$$
where the composition of the arrows is the identity.
In particular, the Teichm\"uller modular form $\chi_9$ maps
to an invariant of degree $27$ and since it is a cusp form
one can check that it must be divisible by the discriminant,
hence is a multiple of the discriminant.

We can extend this to vector-valued Teichm\"uller modular forms
$$
\Sigma \langepijl{} \mathcal{C}(3,4) \langepijl{\nu} \Sigma[1/\chi_9]
$$
with the $T$-module $\Sigma$ defined as
$$
\Sigma=\oplus_{\rho} T_{\rho}
$$
with $\rho$ running through the irreducible representations of ${\rm GL}_3$.

We can ask what the image $\nu(f)$ of the universal ternary quartic is.
By construction it is a meromorphic modular form of weight $(4,0,-1)$.
Here the weight refers to the irreducible representation
${\rm Sym}^4(V)\otimes \det(V)^{-1}$ of ${\rm GL}_3$.

We know that there exists a holomorphic modular cusp form 
$\chi_{4,0,8}$ of weight $(4,0,8)$, see \cite{C-vdG} and below. 

\begin{proposition}
Over ${\CC}$ the Siegel modular 
modular form  $\chi_9 \, \nu(f)$ is a generator of 
$S_{4,0,8}(\Gamma_3)({\CC})$.
\end{proposition}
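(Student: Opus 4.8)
The plan is to imitate, step for step, the proof of Proposition~\ref{prop68} from the degree-$2$ case, with binary sextics replaced by ternary quartics, the discriminant of degree $10$ replaced by the discriminant $\mathrm{disc}$ of a ternary quartic (a polynomial of degree $27$ in $a_0,\dots,a_{14}$), and $\chi_{10}$ replaced by $\chi_9$. I would begin by recording that $\dim S_{4,0,8}(\Gamma_3)(\CC)\geq 1$; this is guaranteed by the explicit nonzero cusp form of weight $(4,0,8)$ constructed below (see also \cite{C-vdG}).

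Next comes the bookkeeping with weights. A Teichm\"uller form of weight $(a,b,c)$ corresponds, via Conclusion~\ref{conclusion-deg3} and the homomorphisms relating $\Sigma$ and $\mathcal{C}(3,4)$, to a concomitant of degree $a+2b+3c$ in $a_0,\dots,a_{14}$, of order $a$ in $x,y,z$ and order $b$ in $\hat x,\hat y,\hat z$; for $(a,b,c)=(4,0,-1)$ this is the universal ternary quartic $f$ (degree $1$, order $4$, order $0$), and $\chi_9$ itself, of scalar weight $9$, pulls back to a nonzero multiple of $\mathrm{disc}$ (degree $27$). Applying this with $(a,b,c)=(4,0,8)$: fixing a nonzero $\chi_{4,0,8}\in S_{4,0,8}(\Gamma_3)(\CC)$, its pullback along
$$
V_{4,0,-1}^0 \to [V_{4,0,-1}^0/{\rm GL}_3] \langepijl{\sim} \mathcal{M}_3^{\rm nh}
$$
is a concomitant $\Phi$ of degree $28$ in the $a_i$, of order $4$ in $x,y,z$ and order $0$ in $\hat x,\hat y,\hat z$, and $\Phi\neq 0$ because the pullback map is injective. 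The decisive input is that $\chi_{4,0,8}$ is a \emph{cusp} form: as in the degree-$2$ situation, a cusp form pulls back to a concomitant divisible by $\mathrm{disc}$. Indeed $\chi_{4,0,8}$ vanishes along the boundary divisor $D$ of $\overline{\mathcal{M}}_3$, the generic point of the discriminant hypersurface of plane quartics is a one-nodal quartic of geometric genus $2$, hence the generic point of $\Delta_0\subset D$, and $\mathrm{disc}$ is irreducible and reduced; so $\mathrm{disc}$ divides $\Phi$.

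It follows that $\Phi/\mathrm{disc}$ is a concomitant of degree $28-27=1$ in the $a_i$, of order $4$ in $x,y,z$ and order $0$ in $\hat x,\hat y,\hat z$, and the space of such concomitants is one-dimensional: linear in the $a_i$, it lies in ${\rm Sym}^1({\rm Sym}^4(V))={\rm Sym}^4(V)$, which is irreducible, so the only equivariant choice is $f$ itself. Hence $\Phi=\lambda\,\mathrm{disc}\cdot f$ for some $\lambda\in\CC^\times$. Applying $\nu$, which satisfies $\nu\circ(\text{pullback})=\mathrm{id}$, is multiplicative, and sends $\mathrm{disc}$ to a nonzero multiple of $\chi_9$, gives $\chi_{4,0,8}=\nu(\Phi)=\lambda'\,\chi_9\,\nu(f)$ with $\lambda'\in\CC^\times$. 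Thus $\chi_9\,\nu(f)$ is a nonzero holomorphic cusp form of weight $(4,0,8)$; since this argument applies to \emph{any} element of $S_{4,0,8}(\Gamma_3)(\CC)$, every such form is a scalar multiple of $\chi_9\,\nu(f)$, so $\dim S_{4,0,8}(\Gamma_3)(\CC)=1$ and $\chi_9\,\nu(f)$ is a generator.

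I expect the main obstacle to be the divisibility statement above, that is, the degree-$3$ analogue of the fact (invoked without comment in the degree-$2$ case) that the concomitant of a cusp form is divisible by the discriminant. Making it precise requires identifying, near a general point of the discriminant locus in $V_{4,0,-1}$, the corresponding point of $\overline{\mathcal{M}}_3$ (a stable irreducible one-nodal curve of arithmetic genus $3$, lying in $\Delta_0$), checking that a holomorphic section of ${\EE}_\rho$ over $\overline{\mathcal{M}}_3$ pulls back to a genuine polynomial concomitant with no denominators along this codimension-one locus, and that a cusp form vanishes there to order at least one. Once these geometric facts are in place, the remainder is the weight bookkeeping and the already-established homomorphisms between $\Sigma$ and $\mathcal{C}(3,4)$.
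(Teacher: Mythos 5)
Your proposal is correct and follows essentially the same route as the paper's own proof: pull back the known nonzero cusp form $\chi_{4,0,8}$ to a concomitant of degree $28$, observe that cuspidality forces divisibility by the degree-$27$ discriminant, and identify the degree-$1$ quotient concomitant with a nonzero multiple of the universal quartic $f$. The only difference is that you spell out the weight bookkeeping and the geometric justification of the divisibility step (vanishing along the nodal locus mapping to the boundary, irreducibility of the discriminant), which the paper asserts without comment.
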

\begin{proof}
The cusp form $\chi_{4,0,8}$ maps to a concomitant of degree $28$
that is divisible by the discriminant.
Therefore, $\chi_{4,0,-1}= \chi_{4,0,8}/\chi_9$
corresponds to a concomitant of degree $1$. This must be a non-zero multiple of $f$.
\end{proof}

If we write the universal ternary quartic lexicographically as
$$
f=a_0 x^4+ a_1x^3y+ \cdots + a_{14} z^4
$$
and we write the meromorphic Teichm\"uller form $\chi_{4,0,-1}$
similarly lexicographically as
$$
\chi_{4,0,-1}= \alpha_0 X^4+ \alpha_1 X^3Y + \cdots + \alpha_{14} Z^4$$
with dummy variables $X,Y,Z$ to indicate the coordinates of
$\chi_{4,0,-1}$, we arrive at the analogue for degree $3$:

\begin{proposition}
The map $\nu: \mathcal{C}(3,4) \to T[1/\chi_9]$ is given by
substituting $\alpha_i$ for $a_i$ (and $X,Y,Z$ for $x,y,z$ and
$\hat{X}, \hat{Y}, \hat{Z}$ for $\hat{x},\hat{y},\hat{z}$).
\end{proposition}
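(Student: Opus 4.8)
The plan is to deduce the substitution formula from two facts already established: that $\nu$ is a homomorphism of the relevant graded rings, and that it sends the universal ternary quartic $f$ to $\chi_{4,0,-1}$. Once these are in hand the formula is forced, because a concomitant is by its very construction one fixed polynomial in the coefficients of $f$ together with the coordinates on $V$ and on $\wedge^2V$, so $\nu$ can only act on it by substituting the images of those coordinates. Concretely, I would first set up the universal picture. By Conclusion~\ref{conclusion-deg3} the pullback of the Hodge bundle $\EE$ under the presentation $V_{4,0,-1}^0\to[V_{4,0,-1}^0/\mathrm{GL}_3]\langepijl{\sim}\mathcal{M}_3^{\rm nh}$ is the equivariant bundle $V$, and hence the pullback of $\wedge^2\EE$ is $\wedge^2V$.

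A concomitant $\Phi$ of degree $d$ whose type is the irreducible representation $U$ of highest weight $(\rho_1\geq\rho_2\geq\rho_3)$ arises, by the correspondence recalled above, from an equivariant embedding $U\hookrightarrow\mathrm{Sym}^d(\mathrm{Sym}^4V)$, and concretely it is a single polynomial
$$
\Phi = P\bigl(a_0,\dots,a_{14};\,x,y,z;\,\hat x,\hat y,\hat z\bigr)
$$
in the coefficients $a_i$ of the universal quartic $f=a_0x^4+a_1x^3y+\cdots+a_{14}z^4$ and in the coordinates $x,y,z$ on $V$ and $\hat x,\hat y,\hat z$ on $\wedge^2V$, homogeneous of degree $d$ in the $a_i$, of degree $\rho_1-\rho_2$ in $x,y,z$ and of degree $\rho_2-\rho_3$ in $\hat x,\hat y,\hat z$, and invariant under $\mathrm{SL}_3$. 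Evaluating $P$ at the tautological quartic over $V_{4,0,-1}^0$ and descending gives the section of $\EE_\rho$ over $\mathcal{M}_3^{\rm nh}$ whose image in $T_\rho[1/\chi_9]$ is $\nu(\Phi)$; the key observation is that this evaluation commutes with the pullback, i.e.\ with $\nu$.

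Next I would invoke the preceding proposition: $\nu(f)=\chi_{4,0,-1}$, and the lexicographic coordinates of $\chi_{4,0,-1}$ with respect to the basis of $\EE$ induced from $x,y,z$ via Conclusion~\ref{conclusion-deg3} are exactly $\alpha_0,\dots,\alpha_{14}$. Under the same pullback the coordinates $x,y,z$ of $V$ become the coordinates $X,Y,Z$ of $\EE$ and the coordinates $\hat x,\hat y,\hat z$ of $\wedge^2V$ become $\hat X,\hat Y,\hat Z$. Since $\nu$ is a ring homomorphism — this is the content of the maps $\Sigma\to\mathcal{C}(3,4)\langepijl{\nu}\Sigma[1/\chi_9]$ and $I(3,4)\langepijl{}T[1/\chi_9]$ recalled before the statement — and since $\Phi$ is the single polynomial $P$ in these generators, $\nu$ replaces each $a_i$ by $\alpha_i$, each of $x,y,z$ by $X,Y,Z$ and each of $\hat x,\hat y,\hat z$ by $\hat X,\hat Y,\hat Z$, so that
$$
\nu(\Phi) = P\bigl(\alpha_0,\dots,\alpha_{14};\,X,Y,Z;\,\hat X,\hat Y,\hat Z\bigr),
$$
which is precisely the assertion.

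I expect the only real obstacle to lie in making the two bookkeeping steps of the previous paragraph rigorous. First, one must check that the ring structure on $\mathcal{C}(3,4)$ — ordinary multiplication of polynomials in the $a_i$, $x,y,z$, $\hat x,\hat y,\hat z$ — is carried by $\nu$ to the ring structure on $\Sigma[1/\chi_9]$ coming from the $\mathrm{GL}_3$-equivariant projections among the relevant Schur functors of $\EE$ and $\wedge^2\EE$; this is where the homomorphism property recalled before the proposition is genuinely used, and it is what reduces the entire statement to the single identity $\nu(f)=\chi_{4,0,-1}$. Second, one must reconcile the weights: $\nu(\Phi)$ is a priori only a meromorphic Teichm\"uller modular form on $\overline{\mathcal M}_3$ with poles along the divisor of $\chi_9$, and one verifies that its weight $\rho$, determined by $d$, $\rho_1-\rho_2$, $\rho_2-\rho_3$ together with the $\det$-twist needed so that the differentials of the plane quartic transform by the standard representation, agrees with the weight of the substituted expression, so that the displayed equality is an equality in $T_\rho[1/\chi_9]$. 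With these in place the preceding proposition supplies everything and no further computation is needed; this is the exact analogue of Proposition~\ref{substitution} for genus two.
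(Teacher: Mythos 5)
Your argument is correct and is exactly the route the paper takes (the paper states this proposition without a separate proof, as the immediate consequence of the preceding proposition identifying $\nu(f)$ with $\chi_{4,0,-1}=\chi_{4,0,8}/\chi_9$, together with the equivariant identification of the pulled-back Hodge bundle with $V$ and the compatibility of $\nu$ with the algebra structure on concomitants). Your two "bookkeeping" points are precisely what the paper leaves implicit, so nothing essential differs.
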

In the following we restrict to $F={\CC}$.
One way to construct a generator of $S_{4,0,8}(\Gamma_3)({\CC})$
 is to take the Schottky
form of degree $4$ and weight $8$ that vanishes on the
Torelli locus. We can develop it along
$\mathcal{A}_{3,1}$, the locus in $\mathcal{A}_4$ of products of
abelian threefolds and elliptic curves. It restriction to
$\mathcal{A}_{3,1}$ is a form in $S_8(\Gamma_3)\otimes S_8(\Gamma_1)$ and thus vanishes. 
The first non-zero term in the Taylor expansion along
$\mathcal{A}_{3,1}$ is
$$
\chi_{4,0,8}\otimes \Delta 
\in S_{4,0,8}(\Gamma_3)\otimes S_{12}(\Gamma_1)
$$
Since the Schottky form can be constructed explicitly with theta functions we can easily obtain the beginning of the Fourier expansion.
We refer to \cite{C-vdG} for the details.

In \cite{CFvdG3} we formulated a criterion that tells us which
elements of $\mathcal{C}(3,4)$ will give holomorphic modular
forms. We can associate to a  concomitant its order along
the locus of double conics by looking at its order in $t$
when we evaluate it on the ternary quartic $t\, f + q^2$ where
$q$ is a sufficiently general quadratic form in $x,y,z$. 
Then the result is
the following, see \cite{CFvdG3}.

\begin{theorem}
Let $c$ be a concomitant of degree $d$ and $v(c)$ its order
along the locus of double conics. 
If $d$ is odd then $\nu(c) \chi_9$ is a Siegel modular form 
with order $v(c)-(d-1)/2$ along the
hyperelliptic locus. If $d$ is even, then the order of 
$\nu(c)$ is $v(c)-d/2$.
\end{theorem}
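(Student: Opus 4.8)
The plan is to translate the statement about concomitants into a statement about the map $\nu$ and the geometry of $\overline{\mathcal M}_3$ inside $\tilde{\mathcal A}_3$, exactly as in the degree $2$ case. First I would recall that a concomitant $c$ of degree $d$ and order $(\rho_1-\rho_2,\rho_2-\rho_3)$ corresponds to an equivariant embedding of the irreducible $U$ of that highest weight into $\mathrm{Sym}^d(\mathrm{Sym}^4(V))$, so that $\nu(c)$ is a section of $\EE_\rho$ over $\mathcal M_3^{\rm nh}$ — a priori only a meromorphic Teichm\"uller form, with poles supported on the complement of $\mathcal M_3^{\rm nh}$, i.e.\ along the hyperelliptic divisor $\mathcal H_3$ (and the boundary $D$). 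Via the identity $T\to I(3,4)\to T[1/\chi_9]$ and its vector-valued analogue, $\nu(c)\cdot\chi_9^{N}$ becomes holomorphic for $N\gg0$; the whole content of the theorem is to pin down the precise power, equivalently the precise order of the pole of $\nu(c)$ along $\mathcal H_3$, in terms of the invariant-theoretic datum $v(c)$.

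Next I would set up the local computation near a generic hyperelliptic curve. The key point is that the boundary $\mathcal H_3\subset\overline{\mathcal M}_3$, pulled back to the GIT side, is exactly the locus of double conics inside $\mathbb P(\mathrm{Sym}^4(V))$: a smooth plane quartic degenerating to a hyperelliptic curve acquires an equation of the form $q^2=0$ for a smooth conic $q$, and the hyperelliptic curve of genus $3$ is recovered as the double cover of the conic branched at $8$ points. Concretely one works on the one-parameter family $t\,f+q^2$ transverse to the double-conic locus, so $v(c)=\operatorname{ord}_{t=0} c(t\,f+q^2)$ by definition, and one must compare $t$ with a uniformizer of $\mathcal H_3$ in $\overline{\mathcal M}_3$. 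The arithmetic $d$ vs.\ $d/2$ (or $(d-1)/2$) and the appearance of $\chi_9$ only in the odd-degree case come from two sources: (i) the transition from the GIT quotient to $\mathcal M_3^{\rm nh}$ involves the determinant twist, so a degree-$d$ concomitant acquires a factor $\det(\EE)^{?}$ whose restriction behaviour near the double conic contributes $t^{-d/2}$; and (ii) $\chi_9$, being the Teichm\"uller square root of $\chi_{18}$ whose divisor is $\mathcal H_3+2D$, has a simple zero along $\mathcal H_3$, so multiplying by $\chi_9$ in the odd case both fixes the half-integrality of the twist and shifts the order along $\mathcal H_3$ by $1$, giving the stated $v(c)-(d-1)/2$ after multiplication and $v(c)-d/2$ without it in the even case.

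Carrying this out, the steps in order are: (1) identify $\pi^{-1}(\mathcal H_3)$ on the GIT side with the double-conic locus and fix a transverse slice $t\mapsto t\,f+q^2$; (2) compute, using Conclusion~9.3 and the determinant twist in $V_{4,0,-1}$, the order in $t$ of the meromorphic form $\nu(c)$ along this slice purely as a function of the weight of $c$, obtaining the $-d/2$ shift; (3) combine with the definition $v(c)=\operatorname{ord}_{t=0}c(t\,f+q^2)$ to get $\operatorname{ord}_{\mathcal H_3}\nu(c)=v(c)-d/2$ in the even case; (4) in the odd case, observe that the half-integer $d/2$ forces $\nu(c)$ by itself not to be a Siegel modular form (only a genuine Teichm\"uller form), and that multiplying by $\chi_9$, which has $\operatorname{ord}_{\mathcal H_3}\chi_9=1$ and weight $\det^9$, both lands us in $S_{\rho\otimes\det^9}(\Gamma_3)$ via the identification $T_\rho^{+}=M_\rho(\Gamma_3)$ and yields $\operatorname{ord}_{\mathcal H_3}(\nu(c)\chi_9)=v(c)-(d-1)/2$; (5) check holomorphy at the boundary $D$ separately — here the Koecher principle for $g=3$ does the work, so no extra power of $\chi_9$ is needed there. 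I expect the main obstacle to be step (2): matching the \emph{scheme-theoretic} multiplicity of the double-conic locus (the pole order of $\nu(f)$ itself, which must come out to be $1/2$ in the normalization where $\chi_9\nu(f)=\chi_{4,0,8}$) with the uniformizer of $\mathcal H_3$ on $\overline{\mathcal M}_3$, since the Torelli map is ramified of order $2$ along $\mathcal H_3$ and one must be careful that the factor of $2$ there is already accounted for by passing to $\chi_9$ rather than $\chi_{18}$; getting this bookkeeping exactly right — so that the base case $c=f$, $d=1$, $v(f)=0$ reproduces $\operatorname{ord}_{\mathcal H_3}\chi_{4,0,8}=0$ — is the delicate part, and everything else is a formal consequence of multiplicativity of $\nu$ and of orders.
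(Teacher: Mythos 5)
First, note that the survey itself gives no proof of this theorem: it is quoted from \cite{CFvdG3} (``Then the result is the following, see \cite{CFvdG3}''), so there is no in-paper argument to compare your plan against, and I can only judge the plan on its merits. Its skeleton is right (double conics as the invariant-theoretic shadow of the hyperelliptic divisor, the slice $q^2+t f$, $\mathrm{ord}_{\mathcal H_3}\chi_9=1$ from $\mathrm{div}(\chi_{18})=\mathcal H_3+2D$ and $\chi_9^2=\chi_{18}$, the base case $\chi_{4,0,8}=\chi_9\nu(f)$ nonvanishing on $\mathcal H_3$), but the step carrying the entire content is exactly the one you defer as ``delicate bookkeeping'', and as written it is a genuine gap. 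What has to be proved is a normal form for $\nu(f)$ at a generic point of $\mathcal H_3$: in a suitable holomorphic frame of ${\EE}$, and with $s$ a transverse parameter on $\overline{\mathcal M}_3$, one has up to a unit $\nu(f)=s^{-1}\bigl(q(s)^2+s^2g(s)\bigr)$ with $q(0)$ a smooth conic (the canonical conic of the hyperelliptic limit) and $g(0)$ a sufficiently general quartic. Your step (2) replaces this by the assertion that ``the determinant twist contributes $t^{-d/2}$'', which is not an argument and in particular does not explain the striking feature that the shift depends only on the degree $d$ and not on the orders in $x,y,z$ and $\hat x,\hat y,\hat z$. That independence comes from equivariance: once the normal form is known, a holomorphic invertible frame change acts on the $U$-valued concomitant without changing orders, and homogeneity of degree $d$ in $a_0,\dots,a_{14}$ gives $\mathrm{ord}_s\,\nu(c)=2v(c)-d$ on $\overline{\mathcal M}_3$ (after a genericity check that the pair $(q(0),g(0))$ produced by the moduli computes the generic value $v(c)$). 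Both clauses then follow using $\mathrm{ord}_s\chi_9=1$, the parity of $\nu(c)$ under the Torelli involution, and the factor $2$ between $s$ and the transverse parameter of the hyperelliptic divisor in $\mathcal A_3$.

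Two further points show this is not cosmetic. First, the parity: $\nu(c)$ is a Siegel modular form for even $d$ and a genuine Teichm\"uller form for odd $d$ because $\nu(f)=\chi_{4,0,8}/\chi_9$ is anti-invariant under the involution, so $\nu(c)$ has sign $(-1)^d$; your proposal attributes this to ``the half-integer $d/2$'' and never verifies that the even case lands in $M_\rho(\Gamma_3)$ at all. Second, the normalization you leave open cannot be fixed at the end by a single consistency check: if all orders were measured with one and the same normalization (say on $\overline{\mathcal M}_3$, where $\chi_9$ has order $1$), the two clauses would contradict multiplicativity of $\nu$ and additivity of $v$ --- a product of two odd-degree concomitants would get order $v_1+v_2-(d_1+d_2)/2-1$ from the odd clause but $v_1+v_2-(d_1+d_2)/2$ from the even clause. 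The statement is coherent precisely because the orders of the Siegel forms are taken along the hyperelliptic divisor in $\mathcal A_3$, against which $s$ squares; equivalently, the uniform statement is $\mathrm{ord}_s\,\nu(c)=2v(c)-d$. Since your step (3) ``combines'' $v(c)$ with an underived $-d/2$ shift without fixing this, the proof as proposed does not yet go through; what is actually needed is the normal form above, obtained e.g.\ by stable reduction of the family $q^2+tf$ and comparison of the equation-induced basis of differentials with one extending across the hyperelliptic limit (your base-case check $c=f$, and the check $c=$ discriminant, $v=14$, $\nu(c)=\chi_9$, are then corollaries, not substitutes, for that computation).
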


We formulate a corollary.
Let $M_{i,j,k}(\Gamma_3)^{(m)}$ be the space of Siegel
modular forms of weight $(i,j,k)$ vanishing with multiplicity
$\geq m$ at infinity. (The weight $(i,j,k)$ corresponds to
the irreducible representation of ${\rm GL}_3$ 
of highest weight $(i+j+k,j+k,k)$.)
Moreover, let $\mathcal{C}_{d,\rho}(-m \, DC)$ be the
vector space of concomitants of type $(d,\rho)$ that have order
$\geq m$ along the locus of double conics. (Type $(d,\rho)$
means belonging to an irreducible representation $U$ of highest weight 
$\rho$ occurring in ${\rm Sym}^d({\rm Sym}^4(V))$.)

\begin{corollary}
The exists an isomorphism
$$
\mathcal{C}_{d,\rho}(-m \, DC) \langepijl{\sim}
M_{\rho_1-\rho_2,\rho_2-\rho_3,\rho_3+9(d-2m)}^{(d-2m)}
$$
given by $c \mapsto \nu(c) \chi_9^{d-2m}$.
\end{corollary}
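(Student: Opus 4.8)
The plan is to deduce the isomorphism directly from the preceding Theorem together with the two maps $\Sigma \langepijl{} \mathcal{C}(3,4) \langepijl{\nu} \Sigma[1/\chi_9]$ whose composition is the identity. First I would fix $d$ and an irreducible representation $U$ of highest weight $\rho=(\rho_1\geq\rho_2\geq\rho_3)$ occurring in ${\rm Sym}^d({\rm Sym}^4(V))$, so that a concomitant $c$ of type $(d,\rho)$ is homogeneous of degree $d$ in $a_0,\ldots,a_{14}$, of degree $\rho_1-\rho_2$ in $x,y,z$ and of degree $\rho_2-\rho_3$ in $\hat x,\hat y,\hat z$. Translating degrees into weights exactly as in the dictionary ``weight $(i,j,k)$ corresponds to highest weight $(i+j+k,j+k,k)$'' and in the computation preceding the Theorem, the meromorphic form $\nu(c)$ has weight $(\rho_1-\rho_2,\rho_2-\rho_3,\rho_3-d/2)$ when $d$ is even, and $\nu(c)\chi_9$ has weight $(\rho_1-\rho_2,\rho_2-\rho_3,\rho_3+9-(d-1)/2)$ when $d$ is odd. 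In either parity, $\nu(c)\chi_9^{d-2m}$ — where $m=v(c)$ enters through the vanishing order — has weight $(\rho_1-\rho_2,\rho_2-\rho_3,\rho_3+9(d-2m))$; here the key bookkeeping is that multiplying by one more factor of $\chi_9$ shifts the last weight slot by $9$ and changes the parity correction, and the $\chi_9$-power $d-2m$ precisely absorbs the parity. This establishes that the map $c\mapsto\nu(c)\chi_9^{d-2m}$ at least lands in Siegel modular forms of the asserted weight.

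Next I would check holomorphy and the vanishing-at-infinity condition. By the Theorem, if $c$ has order $v(c)\geq m$ along the locus of double conics then $\nu(c)\chi_9$ (for $d$ odd) is a \emph{holomorphic} Siegel modular form with order $v(c)-(d-1)/2$ along the hyperelliptic locus, and similarly $\nu(c)$ is holomorphic with the stated order when $d$ is even. Multiplying by the extra powers of $\chi_9$ — a holomorphic cusp form whose divisor is $\mathcal{H}_3+2D$ — keeps things holomorphic and, crucially, each factor contributes $2$ to the order of vanishing along $D$; so $\nu(c)\chi_9^{d-2m}$ vanishes at infinity to order at least $2(d-2m)$, and in particular to order $\geq d-2m$, placing it in $M^{(d-2m)}_{\rho_1-\rho_2,\rho_2-\rho_3,\rho_3+9(d-2m)}(\Gamma_3)$. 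I would spell out that the order $d-2m$ (rather than $2(d-2m)$) is exactly what the corollary claims, so no sharper statement is being asserted; and that the factor $\chi_9^{d-2m}$ makes sense as a holomorphic object precisely because $d-2m\geq 0$, which holds since $m=v(c)$ can be taken in the range where $\mathcal{C}_{d,\rho}(-m\,DC)$ is non-trivial only for $m\leq d/2$ — and one should note that a concomitant of degree $d$ cannot vanish to order more than $d/2$ along the double-conic locus, since substituting $t\,f+q^2$ makes $c$ a polynomial of degree $d$ in the entry, so $v(c)\leq d/2$ automatically.

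For the inverse map I would run the construction backwards: given $h\in M^{(d-2m)}_{\rho_1-\rho_2,\rho_2-\rho_3,\rho_3+9(d-2m)}(\Gamma_3)$, form $h/\chi_9^{d-2m}$, a meromorphic Teichm\"uller form of weight $(\rho_1-\rho_2,\rho_2-\rho_3,\rho_3)$, and apply the first arrow $\Sigma\to\mathcal{C}(3,4)$ (pullback via $V_{4,0,-1}^0\to\mathcal{M}_3^{\rm nh}$) to get a concomitant $c$ of type $(d,\rho)$ with $\nu(c)=h/\chi_9^{d-2m}$; the point is that the $\det(V)^{d-2m}$ worth of twisting is matched on the automorphic side by the $\chi_9^{d-2m}$, so the degree of $c$ is exactly $d$. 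One then checks $v(c)\geq m$ using the Theorem in reverse — the holomorphy (and the vanishing order $\geq d-2m$ at infinity of $h$) forces, via the order formula, that $c$ cannot have too small an order along the locus of double conics, so $c\in\mathcal{C}_{d,\rho}(-m\,DC)$. That $\nu$ and the pullback are mutually inverse on these graded pieces follows from the fact, already established in the excerpt, that the composition $\Sigma\to\mathcal{C}(3,4)\to\Sigma[1/\chi_9]$ is the identity; the only thing to verify is that on the subspaces in question the maps preserve (rather than merely respect) the stated gradings, which is the degree/weight bookkeeping done above. I expect the main obstacle to be precisely this bookkeeping near the boundary: making sure the vanishing order at infinity and the $\chi_9$-power are matched consistently in both parities of $d$, and confirming that the inequality $v(c)\leq d/2$ together with holomorphy of $h/\chi_9^{d-2m}$ pins down $v(c)$ on the nose so that the two maps are genuinely inverse and not just one-sided.
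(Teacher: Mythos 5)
Your overall route---deducing the corollary from the Theorem together with the retraction $\Sigma \to \mathcal{C}(3,4) \to \Sigma[1/\chi_9]$, and inverting by $h \mapsto h/\chi_9^{d-2m}$---is the natural (and surely intended) one; the survey itself gives no proof but refers to \cite{CFvdG3} for exactly the computations you skip. The first genuine problem is the weight bookkeeping, which you yourself call the key step. Since the moduli description uses $V_{4,0,-1}={\rm Sym}^4(V)\otimes\det(V)^{-1}$, every degree in $a_0,\ldots,a_{14}$ carries a twist by $\det(V)^{-1}$, so $\nu(c)$ has weight $(\rho_1-\rho_2,\rho_2-\rho_3,\rho_3-d)$ for \emph{every} $d$; your parity-dependent values $\rho_3-d/2$ and $\rho_3+9-(d-1)/2$ confuse the weight with the orders $v(c)-d/2$, $v(c)-(d-1)/2$ along the hyperelliptic locus occurring in the Theorem, and your final weight $(\rho_1-\rho_2,\rho_2-\rho_3,\rho_3+9(d-2m))$ does not follow from your own formulas (for $d$ even they would give $\rho_3-d/2+9(d-2m)$). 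A correct count gives third entry $\rho_3-d+9(d-2m)$, which is what the examples directly after the corollary confirm (for $d=2$, $m=0$ the piece $V[6,2,0]$ lands in $S_{4,2,16}$, not $S_{4,2,18}$); a proof has to carry out this computation and confront the index in the displayed statement, not assert agreement.

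The second problem is that you never connect the two divisors that matter. The superscript in $M^{(d-2m)}$ refers to vanishing along the divisor $D$ at infinity, while the Theorem controls the order along the hyperelliptic locus $\mathcal{H}_3$ in terms of $v(c)$. The hypothesis $v(c)\geq m$ is never used where it must be, namely to show that $\nu(c)\chi_9^{d-2m}$ is regular along $\mathcal{H}_3$: the Theorem only gives the possibly negative order $v(c)-d/2$ (resp.\ $v(c)-(d-1)/2$), and one must add $(d-2m)$ times the order of $\chi_9$ along $\mathcal{H}_3$ and check the total is $\geq 0$ precisely when $v(c)\geq m$; this count is the heart of the corollary and is absent. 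Along $D$, your claim that each factor of $\chi_9$ contributes $2$ is wrong ($\chi_9^2=\chi_{18}$ has divisor $\mathcal{H}_3+2D$, so $\chi_9$ contributes $1$), and the exact order matters: to invert, you must divide an arbitrary $h\in M^{(d-2m)}$ by $\chi_9^{d-2m}$ and get something with no pole along $D$, which works with order $1$ but not with your order $2$, so ``no sharper statement is being asserted'' is not an out. Finally, for the inverse map, pulling back $h/\chi_9^{d-2m}$ only gives a regular function on $V^0_{4,0,-1}$, a priori a polynomial divided by a power of the discriminant; the substantive point---that it is a genuine polynomial concomitant of degree $d$ with order $\geq m$ along the double conics, so that the two maps are inverse between exactly these subspaces---is not supplied by ``the Theorem in reverse'', and your bound $v(c)\leq d/2$ is a non sequitur (degree $\leq d$ in $t$ only gives $v(c)\leq d$).
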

This allows now the construction of Siegel modular forms
and Teichm\"uller modular forms of degree $3$. In fact, in principle, 
all of them. As a simple
example we decompose
$$
{\rm Sym}^2({\rm Sym}^4(V))= V[8,0,0]+V[6,2,0]+V[4,4,0]\, .
$$
The concomitant corresponding to $U=V[8,0,0]$ yields via $\nu$
the symmetric square of $\nu(f)$. The concomitant corresponding
to $V[6,2,0]$ yields a form in that after multiplication by
$\chi_{18}$ becomes a holomorphic form
in $S_{4,2,16}$ vanishing with multiplicity $2$ at infinity.
Similarly, the concomitant $c$ corresponding to 
 $U=V[4,4,0]$ yields a cusp form $\nu(c) \chi_{18} \in S_{0,4,16}$
vanishing with multiplicity $2$ at infinity. 
We refer for more examples to \cite{CFvdG3}.

The method also allows to treat the 
positive characteristic case. We hope to come back to it at another
occasion.  
\end{section}


\begin{thebibliography}{99}
\bibitem{website} J. Bergstr\"om, F.\ Cl\'ery,
C.\ Faber, and G. van der Geer:
Siegel Modular Forms of Degree Two and Three, 2017. 
 http://smf.compositio.nl.

\bibitem{B-N} S.\ B\"ocherer, S.\ Nagaoka:
\textsl{On mod $p$ properties of Siegel modular forms.}
Math.\ Ann.\ \textbf{338} (2007), pp.\ 421--433.


\bibitem{BGHZ} J.\ Bruinier, G.\ van der Geer, G.\ Harder, D.\ Zagier:
{\sl The 1-2-3 of modular forms.} Universitext. Springer Verlag 2007.

\bibitem{CFvdG} F.\ Cl\'ery, C.\ Faber, G.\ van der Geer:
{\sl Covariants of binary sextics and vector-valued Siegel modular forms of
genus $2$.}
Math.\ Ann.\ \textbf{369} (2017), no. 3--4, 1649--1669.

\bibitem{CFvdG2} F.\ Cl\'ery, C.\ Faber, G.\ van der Geer:
{\sl Covariants of binary sextics and modular forms
of degree $2$ with character.}
Math.\ Comp.\ 88 (2019), no.\ 319, pp.\ 2423-–2441.

\bibitem{CFvdG3} F.\ Cl\'ery, C.\ Faber, G.\ van der Geer:
Concomitants of ternary quartics and vector- valued modular forms of genus three. Selecta
Math. (2020) (2020) 26:55.

\bibitem{C-vdG} F.\ Cl\'ery, G.\ van der Geer:
{\sl Constructing vector-valued Siegel modular forms
from scalar-valued Siegel modular forms.}
Pure and Applied Mathematics Quarterly \textbf{11} (2015), no. 1, 21--47.

\bibitem{C-vdG2} F.\ Cl\'ery, G.\ van der Geer:
{\sl On vector-valued Siegel modular forms of degree 2 
and weight $(j,2)$.}
Documenta Mathematica \textbf{23} (2018), 1129–-1156.

\bibitem{C-vdG3} F.\ Cl\'ery, G.\ van der Geer:
\textsl{Modular forms of degree two and curves of genus two
in characteristic two.} {\tt arXiv:2003.00249}. IMRN
Vol. 2022, No.\ 7, pp.\ 5204–-5218.

\bibitem{Deligne1975} P.\ Deligne: 
\textsl{Courbes elliptiques: Formulaire (d'apr\`es J.\ Tate).}
In: Modular Functions IV. Lecture Notes in Mathematics 476, pp.\ 53--73.
Springer Verlag, Berlin 1975.

\bibitem{Dixmier} J.\ Dixmier:
{\sl On the projective invariants of quartic plane curves.}
Adv.\ Math.\ \textbf{64} (1987),
279--304.

\bibitem{D-S} R.\ Donagi, R.\ Smith:
{\sl The structure of the Prym map.} Acta Math.\ \textbf{146} (1981), 25--102.


\bibitem{E-vdG} T.\ Ekedahl, G.\ van der Geer:
\textsl{Cycle classes of the
E-O stratification on the moduli of abelian varieties,}
in: Algebra, Arithmetic, and Geometry, Y.\ Tschinkel and Y.\ Zarhin (eds.),
Progress in Mathematics 269,
Birkh\"auser, Basel (2010).

\bibitem{F-C} G.\ Faltings, C.-L.\ Chai:
{\sl Degeneration of abelian varieties.}
Ergebnisse der Mathematik und ihrer Grenzgebiete (3), 22.
Springer-Verlag, Berlin, 1990.

\bibitem{vdG1987} G.\ van der Geer: Hilbert modular surfaces.
Springer Verlag 1987.

\bibitem{vdG} G.\ van der Geer:
\textsl{Cycles on the moduli space of abelian varieties.} In:
Moduli of curves and abelian varieties,
Aspects Math., E33, Vieweg, Braunschweig (1999)

\bibitem{vdG2020} G.\ van der Geer:
The ring of modular forms of degree two in characteristic three. 
Mathematische Zeitschrift. DOI 10.1007/s00209-020-02621-6.

\bibitem{Geyer} W.\ Geyer: \textsl{Invarianten bin\"arer Formen.} In:
Classification of algebraic varieties and compact complex manifolds, 
pp.\ 36–-69. Lecture Notes in Math.\ \textbf{412}, Springer, Berlin, 1974. 

\bibitem{G-Y}
J.\ H.\ Grace, A.\ Young: The algebra of invariants.
1903,  Cambridge: Cambridge University Press.

\bibitem{Ichikawa} T.\ Ichikawa: 
\textsl{Siegel modular forms of degree $2$ over rings.}
Journal of Number Theory \textbf{129} (2009), pp.\ 818–-823

\bibitem{Ichikawa1} T.\ Ichikawa:
{\sl On Teichm\"uller modular forms.}  Math.\ Ann.\ \textbf{299} (1994), 731--740.

\bibitem{Ichikawa2} T.\ Ichikawa:
{\sl Teichm\"uller modular forms of degree $3$.}
Amer.\ J.\ Math.\ \textbf{117}
(1995), 1057--1061.


\bibitem{Igusa1960} J-I. Igusa:
\textsl{Arithmetic variety of moduli for genus two.}
Annals of Math.\ {\bf{72}}, 1960, pp.\ 612--649.
 
\bibitem{Igusa1967} J-I.\ Igusa:
\textsl{Modular Forms and Projective Invariants}.
Am.\ Journal of Math., \textbf{89}, (1967), pp.\ 817--855

\bibitem{Igusa1979} J-I. Igusa: 
\textsl{On the ring of modular forms of degree two over ${\ZZ}$.}
Am.\ Journal of Math., \textbf{ 101} (1979), pp.\ 149--183.



\bibitem{LerRit} R. Lercier, C. Ritzenthaler:
{\sl Siegel modular forms of degree three and invariants of ternary quartics.}
Preprint 2019, {\tt arXiv:1907.07431}.

\bibitem{N} S.\ Nagaoka:
\textsl{Note on mod $p$ Siegel modular forms.} I, II.
Math.\ Zeitschrift \textbf{235} (2000), pp.\ 405--420,
Math.\ Zeitschrift \textbf{251} (2005), pp.\ 821--826.

\bibitem{Petersen} D.\ Petersen:
{\sl Cohomology of local systems on the moduli of principally polarized abelian surfaces.} 
Pac.\ J.\ Math.\ \textbf{275} (2015), 39--61.


\bibitem{Shioda} T.\ Shioda:
{\sl On the graded ring of invariants of binary octavics.}
Amer.\ J.\ Math.\ \textbf{89} (1967), 1022--1046.

\bibitem{Sk} N.-P.\ Skoruppa:
{\sl \"Uber den Zusammenhang zwischen Jacobiformen
und Modulformen halbganzen Gewichts. }
Inaugural-Dissertation, Bonner Mathematische Schriften 159,
Bonn, 1984.

\bibitem{Taibi} O.\ Ta\"{\i}bi:
{\sl Dimensions of spaces of level one automorphic forms for split classical groups using the trace formula.}
Ann.\ Sci.\ \'Ec.\  Norm.\ Sup\'er.\ (4) \textbf{50}, No. 2, (2017) 
269--344.
 
\bibitem{Tsushima} R.\ Tsushima:
{\sl An explicit dimension formula for the spaces of generalized
automorphic forms with respect to ${\rm Sp}(2,\ZZ)$.}
Proc.\ Japan Acad.\ Ser.\ A Math.\ Sci.\ \textbf{59} (1983), no. 4, 139--142.

\bibitem{Tsuyumine} S.\ Tsuyumine: {\sl
On Siegel modular forms of degree three.}
Amer.\ J.\ Math.\ \textbf{108} (1986), 755--862.
{\sl Addendum to ``On Siegel modular forms of degree three."}
Amer.\ J.\ Math.\ \textbf{108} (1986), 1001--1003.

\end{thebibliography}
\end{document}